\documentclass{article}
\usepackage[latin9]{inputenc}
\usepackage{mathtools}
\usepackage{amsmath}
\usepackage{amsthm}
\usepackage{amssymb}
\usepackage[unicode=true,
 bookmarks=false,
 breaklinks=false,pdfborder={0 0 1},backref=section,colorlinks=false]
 {hyperref}
\hypersetup{
 hidelinks}

\makeatletter
\theoremstyle{plain}
\newtheorem{thm}{\protect\theoremname}
\theoremstyle{remark}
\newtheorem{rem}[thm]{\protect\remarkname}
\theoremstyle{remark}
\newtheorem*{rem*}{\protect\remarkname}
\theoremstyle{plain}
\newtheorem{prop}[thm]{\protect\propositionname}
\theoremstyle{definition}
\newtheorem{defn}[thm]{\protect\definitionname}

\@ifundefined{date}{}{\date{}}
\usepackage{enumerate}
\usepackage{comment}

\providecommand{\definitionname}{Definition}
\providecommand{\propositionname}{Proposition}
\providecommand{\remarkname}{Remark}
\providecommand{\theoremname}{Theorem}

\makeatother

\providecommand{\definitionname}{Definition}
\providecommand{\propositionname}{Proposition}
\providecommand{\remarkname}{Remark}
\providecommand{\theoremname}{Theorem}

\begin{document}
\title{Variational reduction of homogenous Lagrangian systems}
\author{J. Fernández$\;^{1}$, S. Grillo$\;^{1,2}$, JC Marrero$\;^{3}$,
E. Padrón$\;^{3}$}

\maketitle
\vspace{-20pt}

\begin{center}
\textit{\small{}{}$^{1}$ Instituto Balseiro, Universidad Nacional
de Cuyo and $^{2}$CONICET, Av. Bustillo 9500, San Carlos de Bariloche
R8402AGP, República Argentina}{\small\par}
\par\end{center}

\begin{center}
 \textit{\small{}{}e-mail: jfernand@ib.edu.ar, sergio.grillo@ib.edu.ar }{\small\par}
\par\end{center}

\begin{center}
 
\par\end{center}

\begin{center}
\textit{\small{}{}$\;^{3}$ULL-CSIC Geometr{í}a Diferencial y Mecánica
Geométrica, Departamento de Matemáticas, Estad{í}stica e Investigación
Operativa and Instituto de Matemáticas y Aplicaciones (IMAULL)}{\small{}}\\
{\small{} }\textit{\small{}{}University of La Laguna, Spain }\\
\textit{\small{} {}e-mail: jcmarrer@ull.edu.es, mepadron@ull.edu.es
}{\small{} }{\small\par}
\par\end{center}

\begingroup 
\global\long\def\thefootnote{}%
 \footnotetext{\hspace{-20pt} AMS Mathematics Subject Classification
(2020): Primary 70H03, 70H30, 70G65; Secundary 53D05, 53D10.

\noindent Keywords: Homogeneous Lagrangian systems, scaling symmetry,
variational formulation, reduction process, reconstruction process,
scaling-Lagrange-Poincaré equations.} \endgroup

\vspace{30pt}

\hfill{}{ \textit{{ In memory of our friend H. Cendra}}}
\begin{abstract}
In this paper we show that a variational reduction procedure can be
defined for Lagrangian systems subject to scaling symmetries (i.e.
Lagrangian systems defined by a homogenous Lagrangian function), in
such a way that the trajectories of the system can be reconstructed
up to quadratures from the critical points of the reduced variational
principle. Also, we characterize the mentioned critical points in
terms of a set of ordinary differential equations which are the scaling
analogue of the Lagrange-Poincaré equations. Finally, we study if
the homogeneous Lagrangian systems are naturally related or not with
the Herglotz variational principle. 
\end{abstract}
\tableofcontents{}

\section{Introduction}

Homogeneous Hamiltonian systems have been extensively studied in recent
years from very different perspectives (see, for instance, Ref. \cite{Ar,bgmp,BrJaSl,BGG,GG,grab,gralo,Ma}).
One of these perspectives is related to the scaling reduction process.

A symplectic homogeneous Hamiltonian system with a scaling symmetry
generates, after a reduction process and on the reduced manifold $C,$
a distribution of codimension 1 which is maximally non-integrable;
that is, a contact structure on $C$. The Hamiltonian dynamics projects
onto the reduced space $C$ and the corresponding dynamical vector
field can be defined from the mentioned contact structure. Similar
results were obtained for Poisson homogeneous Hamiltonian systems.
In this case, a Kirillov structure for a line fiber on the reduced
space defines the reduced dynamics (see Refs. \cite{bgmp} and \cite{GG}).
Moreover, the reduced version of all these systems can be defined
in such a way that, from the trajectories of the reduced system, the
trajectories of the original system can be reconstructed up to one
quadrature.

The main aim of the present paper is to study what happens on the
Lagrangian side or, more precisely, in the variational formulation
of Lagrangian systems. Specifically, given a homogeneous Lagrangian
system with its associated Hamilton variational principle, we ask
whether there exists a reduced variational principle such that, from
its critical points, the critical points of the original Hamilton
principle can be reconstructed. We show that such a principle exists,
at least when the homogeneity is given by a principal action $\psi:\mathbb{R}^{+}\times Q\rightarrow Q$,
where $\mathbb{R}^{+}$ is the multiplicative group of positive real
numbers and $Q$ is the configuration space of the Lagrangian system.
Moreover, as in the Hamiltonian case, the trajectories of the original
Lagrangian system can be reconstructed up to a single quadrature.

The reduced variational principle in question is not defined on the
quotient manifold $Q/\mathbb{R}^{+}$, but rather on the associated
line bundle $\left(Q\times\mathbb{R}\right)/\mathbb{R}^{+}$ related
to $\psi$. This is exactly what happens in variational reduction
by standard symmetries (see \cite{cmr}): if the symmetry is given
by a Lie group $G$, a reduced variational principle can be defined
on the associated bundle $\tilde{\mathfrak{g}}=\left(Q\times\mathfrak{g}\right)/G$,
where $\mathfrak{g}$ is the Lie algebra of $G$, in such a way that
the critical points of the Hamilton principle can be reconstructed
from those of the reduced principle. Thus, in some sense, we extend
to scaling symmetries a classical result on variational reduction
valid for standard symmetries.

The paper is organized as follows. In Section \ref{prel} we recall
some basic definitions and results on variational principles, affine
connections, and principal actions. In Section \ref{lsss} we present
a brief review of variational reduction by standard symmetries as
described in Ref. \cite{cmr}, focusing on the case in which the symmetry
group is abelian and the associated principal bundle has a flat connection.
The reconstruction process is also described in this particular case.
In the main section of the paper, Section \ref{hls}, we define what
we mean by a (positive) homogeneous Lagrangian system, describe several
examples, and construct the reduced variational principle mentioned
above together with the corresponding reconstruction process. We also
show that the critical points of that principle are given by the solutions
of a system of ordinary differential equations similar to the so-called
Lagrange-Poincaré equations (see again Ref. \cite{cmr}). In Section
\ref{S5}, we study the relationship between our reduced variational
principle and the Herglotz variational principle (see Ref. \cite{lainz}).
We show that, in general, for a given homogeneous Lagrangian system,
the set of critical points of its associated reduced variational principle
does not coincide with the set of critical points of the Herglotz
variational principle for any action-dependent Lagrangian (see Ref.
\cite{lainz}). Conversely, given an action-dependent Lagrangian,
the set of critical points of its associated Herglotz variational
principle does not coincide with the set of critical points of the
reduced variational principle of any homogeneous Lagrangian system.
The paper ends with the description of future research directions
in Section \ref{S6} and with an appendix which contains the proof
of a technical result which was used in Section \ref{S5.2}.

\bigskip{}

Hernán Cendra, our esteemed colleague and dear friend, was always
interested in Reduction Theory in different frameworks (see, for instance,
Refs. \cite{cg,CMPR,cmr}). We believe that he would have appreciated
our approach in this paper, which we dedicate to his memory.

\bigskip{}

We assume that the reader is familiar with the basic notions of differential
geometry, Lie groups, and Hamiltonian and Lagrangian mechanics (see
for instance Ref. \cite{AbMa}). 
\begin{itemize}
\item We shall work in the $C^{\infty}$-category, and all the manifolds
will be finite-dimensional. 
\item We shall denote the canonical tangent and cotangent projections on
a manifold $X$ by $\tau_{X}:TX\rightarrow X$ and $\pi_{X}:T^{*}X\rightarrow X$,
respectively. 
\end{itemize}

\section{Some preliminary definitions and conventions}

\label{prel} In this section, we will review some definitions and
basic constructions on variational principles, affine connections
on vector bundles and principal actions and connections (for more
details see, for instance, \cite{AbMa,GHV})

\subsection{Variational principles}

\label{varpri} 
\begin{itemize}
\item Given $\tau>0$, a manifold $M$ and a curve $c:\left[0,\tau\right]\rightarrow M$,
by an \textbf{infinitesimal variation} (or simply a \textbf{variation})
of $c$ we shall mean a curve $\delta c:\left[0,\tau\right]\rightarrow TM$
such that 
\[
\delta c\left(t\right)\in T_{c\left(t\right)}M,\quad\forall t\in\left[0,\tau\right].
\]
Clearly, it can be described by the formula 
\begin{equation}
\delta c\left(t\right)=\left.\frac{\partial}{\partial s}\right|_{s=0}\Gamma\left(s,t\right),\label{dcg}
\end{equation}
for some $\epsilon>0$ and some smooth function $\Gamma:\left(-\epsilon,\epsilon\right)\times\left[0,\tau\right]\rightarrow M$
such that 
\[
\Gamma\left(0,t\right)=c\left(t\right),\quad\forall t\in\left[0,\tau\right].
\]
\item Given an $\mathbb{R}$-valued functional $\mathfrak{F}$ on the set
of all smooth curves $c:\left[0,\tau\right]\rightarrow M$, by the
\textbf{variation of} $\mathfrak{F}$ at $c$, with respect to an
infinitesimal variation $\delta c$, we mean the quantity 
\[
\delta\mathfrak{F}\left(c\right)\coloneqq\left.\frac{\partial}{\partial s}\right|_{s=0}\mathfrak{F}\left(\Gamma\left(s,\cdot\right)\right).
\]
(We shall assume $\mathfrak{F}$ is regular enough so that $\delta\mathfrak{F}\left(c\right)$
does not depend on the particular function $\Gamma$ that defines
$\delta c$). 
\end{itemize}
\begin{rem}
\label{invar} For every expression that depends on the values of
a curve $c$ and its derivatives, say $f\left(c\left(t\right),\dot{c}\left(t\right),\ldots\right)$,
we shall write 
\[
\delta\left(f\left(c\left(t\right),\dot{c}\left(t\right),\ldots\right)\right)\coloneqq\left.\frac{\partial}{\partial s}\right|_{s=0}f\left(\Gamma\left(s,t\right),\frac{\partial}{\partial t}\Gamma\left(s,t\right),\ldots\right).\quad\diamond
\]
\end{rem}

\begin{itemize}
\item We shall say that a curve $c$ is a \textbf{critical point} of $\mathfrak{F}$,
\textbf{with respect to a given subset} of variations $\mathcal{V}$,
if 
\[
\delta\mathfrak{F}\left(c\right)=0,\quad\forall\delta c\in\mathcal{V};
\]
and we shall say that the pair $\left(\mathfrak{F},\mathcal{V}\right)$
is a \textbf{variational principle} (on $M$). When the subset $\mathcal{V}$
of variations is clear from the context, we shall just say that $c$
is a \textit{critical point of} $\mathfrak{F}$. 
\end{itemize}

\subsection{Affine connections}

\label{affcon} 
\begin{itemize}
\item Consider a vector bundle $\Pi:\mathsf{E}\rightarrow M$ and an affine
connection $\nabla$ on $\mathsf{E}$. This gives rise to a diffeomorphism
\begin{equation}
\Omega:T\mathsf{E}\rightarrow\mathsf{E}\oplus TM\oplus\mathsf{E}\label{Om}
\end{equation}
such that 
\[
\Omega\left(V\right)=\tau_{\mathsf{E}}\left(V\right)\oplus\Pi_{*}\left(V\right)\oplus\frac{D}{Dt}w\left(0\right),
\]
where $\oplus$ denotes the Whitney sum for bundles on $M$, $D/Dt$
is the covariant derivative associated to $\nabla$ and $w:\left(-\epsilon,\epsilon\right)\rightarrow\mathsf{E}$
is a curve such that 
\[
w\left(0\right)=\tau_{\mathsf{E}}\left(V\right)\quad\textrm{and}\quad\frac{d}{dt}w\left(0\right)=V.
\]
The inverse of $\Omega$ is given by 
\[
\Omega^{-1}\left(X\oplus Y\oplus Z\right)=\frac{d}{dt}\hat{w}\left(0\right),
\]
where $\hat{w}:\left(-\epsilon,\epsilon\right)\rightarrow\mathsf{E}$
satisfies 
\[
\hat{w}\left(0\right)=X,\quad\Pi_{*}\left(\frac{d}{dt}\hat{w}\left(0\right)\right)=Y\quad\textrm{and}\quad\frac{D}{Dt}\hat{w}\left(0\right)=Z.
\]
\item For each $v\in\mathsf{E}$, we have the linear isomorphism 
\[
\Omega_{v}:T_{v}\mathsf{E}\rightarrow T_{\Pi\left(v\right)}M\oplus\mathsf{E}_{\Pi\left(v\right)}
\]
given by 
\[
\Omega\left(V\right)=v\oplus\Omega_{v}\left(V\right).
\]
Here $\mathsf{E}_{\Pi\left(v\right)}$ is the fiber of $\Pi:E\to M$
over $\Pi(v)$, that is, $\mathsf{E}_{\Pi\left(v\right)}=\Pi^{-1}(\Pi(v)).$ 
\item If $\mathsf{E}=TM$ and $\Pi=\tau_{M}$, we have the diffeomorphism
\[
\Omega:TTM\rightarrow TM\oplus TM\oplus TM.
\]
And given a curve $c:\left(-\epsilon,\epsilon\right)\rightarrow M$
and a variation $\delta c:\left(-\epsilon,\epsilon\right)\rightarrow TM$
of it, it follows that 
\begin{equation}
\Omega_{\delta c\left(t\right)}\left(\frac{d}{dt}\delta c\left(t\right)\right)=\dot{c}\left(t\right)\oplus\frac{D}{Dt}\delta c\left(t\right).\label{Ov}
\end{equation}
\item For each function $f:\mathsf{E}\rightarrow\mathbb{R}$, we can define
the \textbf{fiber derivative }(or \textbf{Legendre transform}) $\mathbb{F}f:\mathsf{E}\rightarrow\mathsf{E}^{*}$
and the \textbf{base derivative} $\mathbb{B}f:\mathsf{E}\rightarrow T^{*}M$
by the equation 
\begin{equation}
\left(\Omega_{v}^{*}\right)^{-1}\left(df\left(v\right)\right)=\mathbb{B}f\left(v\right)\oplus\mathbb{F}f\left(v\right),\label{fbd}
\end{equation}
where $\Omega_{v}^{*}$ is the dual of $\Omega_{v}$. Of course, $\mathbb{F}f$
does not depend on the connection, and it is also given by 
\[
\left\langle \mathbb{F}f\left(v\right),w\right\rangle =\left.\frac{d}{dt}\right|_{t=0}f\left(v+tw\right).
\]
\end{itemize}

\subsection{Principal actions and the Atiyah diffeomorphism}

\label{paad} 
\begin{itemize}
\item Given a manifold $Q$ and a Lie group $G$, suppose we have an action
$\psi:G\times Q\rightarrow Q$ such that the canonical projection
$\pi:Q\rightarrow\left.Q\right/G$ is a principal bundle. We shall
say that $\psi$ is a \textbf{principal action}. 
\item Let $\mathfrak{g}$ be the Lie algebra of $G$. Related to the action
$\psi$ we have the \textbf{adjoint bundle} $\tilde{\mathfrak{g}}\rightarrow\left.Q\right/G$,
where 
\[
\tilde{\mathfrak{g}}=\frac{Q\times\mathfrak{g}}{G},
\]
and the action involved in the last quotient is given by 
\[
\left(g,\left(q,\eta\right)\right)\in G\times\left(Q\times\mathfrak{g}\right)\mapsto\left(\psi_{g}\left(q\right),\mathsf{Ad}_{g}\eta\right)\in Q\times\mathfrak{g}.
\]
If $G$ is an abelian group, then $\mathsf{Ad}_{g}$ is the identity
and we have the canonical identification 
\begin{equation}
\tilde{\mathfrak{g}}\xrightarrow{\thicksim}\left(\left.Q\right/G\right)\times\mathfrak{g}:\left[\left(q,\eta\right)\right]\mapsto\left(\left[q\right],\eta\right).\label{nid}
\end{equation}
\item Fix a principal connection form $\varpi:TQ\rightarrow\mathfrak{g}$
for $\pi$ and denote $p:TQ\rightarrow\left.TQ\right/G$ the canonical
projection related to the tangent lift of $\psi$. The map 
\[
\alpha_{\varpi}:\left.TQ\right/G\rightarrow T\left(\left.Q\right/G\right)\oplus\tilde{\mathfrak{g}}
\]
given by 
\[
\alpha_{\varpi}\left(p\left(v_{q}\right)\right)=\pi_{*}\left(v_{q}\right)\oplus\left[\left(q,\varpi\left(v_{q}\right)\right)\right]
\]
is a diffeomorphism: the \textbf{Atiyah diffeomorphism}. When $G$
is abelian, according to the identification \eqref{nid}, the latter
can be seen as a map 
\begin{equation}
\alpha_{\varpi}:\left.TQ\right/G\rightarrow T\left(\left.Q\right/G\right)\times\mathfrak{g}\label{adiff}
\end{equation}
such that 
\begin{equation}
\alpha_{\varpi}\left(p\left(v_{q}\right)\right)=\left(\pi_{*}\left(v_{q}\right),\varpi\left(v_{q}\right)\right).\label{adiff2}
\end{equation}
\end{itemize}

\section{Lagrangian systems with standard symmetries}

\label{lsss}

In this section we briefly review the variational reduction and reconstruction
of the Lagrangian systems subject to standard symmetries (in the abelian
case).

\subsection{Variational formulation and the Euler-Lagrange equations}

Consider a Lagrangian system on a manifold $Q$ with Lagrangian function
$L:TQ\rightarrow\mathbb{R}$. We shall indicate it by the pair $\left(Q,L\right)$.
Recall that, given $\tau>0$, its trajectories $\gamma:\left[0,\tau\right]\rightarrow Q$
are (by definition) the critical points of the functional 
\begin{equation}
A\left(\gamma\right)=\int_{0}^{\tau}L\left(\dot{\gamma}\left(t\right)\right)\,\mathsf{d}t,\label{aL}
\end{equation}
the \textbf{action}\textit{ }of $L$, w.r.t. infinitesimal variations
$\delta\gamma:\left[0,\tau\right]\rightarrow TQ$ satisfying 
\begin{equation}
\delta\gamma\left(0\right),\delta\gamma\left(\tau\right)\in\mathsf{O}_{Q},\label{eLv}
\end{equation}
where $\mathsf{O}_{Q}\subseteq TQ$ is the null distribution on $Q$.
Eqs. \eqref{aL} and \eqref{eLv} define the so-called \textbf{Hamilton
Principle}. As shown in Ref. \cite{cg}, and following the notation
and terminology of Section \ref{affcon}, if we fix a torsion-free
affine connection for $\tau_{Q}:TQ\rightarrow Q$, the critical points
of above variational principle can be described as the solutions $\gamma:\left[0,\tau\right]\rightarrow Q$
of the equations 
\begin{equation}
-\frac{D}{Dt}\mathbb{F}L\left(\dot{\gamma}\left(t\right)\right)+\mathbb{B}L\left(\dot{\gamma}\left(t\right)\right)=0,\quad\forall t\in\left(0,\tau\right),\label{9'}
\end{equation}
where the fiber and base derivatives $\mathbb{F}L:TQ\rightarrow T^{*}Q$
and $\mathbb{B}L:TQ\rightarrow T^{*}Q$ are given as in Eq. \eqref{fbd}.
The latter gives a global expression of the \textbf{Euler-Lagrange
equations}. In fact, if $(q^{i})$ are local coordinates on an open
subset $U$ of $Q$, $(q^{i},\dot{q}^{i})$ are the corresponding
fibered local coordinates on $\tau_{Q}^{-1}(U)$ and we take the local
flat affine connection for $(\tau_{Q})_{|\tau_{Q}^{-1}(U)}:\tau_{Q}^{-1}(U)\to U$,
then 
\[
\mathbb{F}L(q^{i},\dot{q}^{i})=(q^{i},\frac{\partial L}{\partial\dot{q}^{i}}),\;\;\;\;\mathbb{B}L(q^{i},\dot{q}^{i})=(q^{i},\frac{\partial L}{\partial{q}^{i}}),\;\;\;\;\frac{D}{Dt}(\mathbb{F}L(q^{i},\dot{q}^{i}))=(q^{i},\frac{d}{dt}(\frac{\partial L}{\partial\dot{q}^{i}}))
\]
and Eq. \eqref{9'} are the standard Euler-Lagrange equations 
\[
\frac{d}{dt}\left(\frac{\partial L}{\partial q^{i}}\right)-\frac{\partial L}{\partial q^{i}}=0,\;\;\mbox{for all \ensuremath{i}}.
\]

\subsection{The reduced variational principle and the reconstruction equations}

Suppose we have a principal action $\psi:G\times Q\rightarrow Q$
of some Lie group $G$, with related principal bundle $\pi:Q\rightarrow\left.Q\right/G$,
such that 
\[
L\left(\left(\psi_{g}\right)_{*,q}\left(v\right)\right)=L\left(v\right),\quad\forall g\in G,\;q\in Q,\;v\in T_{q}Q,
\]
i.e. $L$ is $G$-invariant w.r.t. the tangent lift of $\psi$. In
other words, $\left(Q,L\right)$ has a \textit{standard symmetry}
defined by the Lie group $G$. For later convenience, we shall assume
that $G$ is abelian. Following the procedure developed in Ref. \cite{cmr},
and using the notation of Section \ref{paad}, fix a principal connection
form $\varpi:TQ\rightarrow\mathfrak{g}$ for $\pi$ and consider the
related Atiyah diffeomorphism $\alpha_{\varpi}$ (see Eqs. \eqref{adiff}
and \eqref{adiff2}). Note that, since $\tilde{\mathfrak{g}}$ can
be identified with the trivial bundle $\left(\left.Q\right/G\right)\times\mathfrak{g}$
(recall Eq. \eqref{nid}), 
\begin{equation}
T\tilde{\mathfrak{g}}\simeq\mathfrak{g}\times T\left(\left.Q\right/G\right)\times\mathfrak{g}.\label{tgt}
\end{equation}
For simplicity, assume that $\varpi$ can be chosen flat. With all
that, we can define: 
\begin{itemize}
\item the \textbf{reduced Lagrangian} 
\[
\ell:T\left(\left.Q\right/G\right)\times\mathfrak{g}\rightarrow\mathbb{R},
\]
given by the equation $L=\ell\circ\alpha_{\varpi}\circ p$, where
$p:TQ\rightarrow\left.TQ\right/G$ is the canonical projection related
to the tangent lift of $\psi$, and the \textbf{reduced variational
principle}\textit{, }given by 
\item the\textit{ }\textbf{reduced action}\textit{ } 
\begin{equation}
\hat{A}\left(x,y\right)=\int_{0}^{\tau}\ell\left(\dot{x}\left(t\right),y\left(t\right)\right)\,\mathsf{d}t,\label{raL}
\end{equation}
defined on curves $\left\{ \left(x,y\right):\left[0,\tau\right]\rightarrow\left(\left.Q\right/G\right)\times\mathfrak{g}\simeq\tilde{\mathfrak{g}}\right\} $, 
\item and the \textbf{reduced infinitesimal variations}\textit{ }(see Eq.
\eqref{tgt})\textit{ 
\[
\delta\left(x,y\right)\simeq\left(y,\delta x,\delta y\right):\left[0,\tau\right]\rightarrow T\tilde{\mathfrak{g}}\simeq\mathfrak{g}\times T\left(\left.Q\right/G\right)\times\mathfrak{g},
\]
} where $\delta y:\left[0,\tau\right]\rightarrow\mathfrak{g}$ is
of the form 
\begin{equation}
\delta y\left(t\right)=\frac{d}{dt}\eta\left(t\right),\quad\textrm{for some}\quad\eta:\left[0,\tau\right]\rightarrow\mathfrak{g},\label{reLv1}
\end{equation}
and $\delta x$ and $\eta$ satisfy the boundary conditions 
\begin{equation}
\delta x\left(0\right),\delta x\left(\tau\right)\in\mathsf{O}_{\left.Q\right/G},\quad\eta\left(0\right)=\eta\left(\tau\right)=0.\label{reLv}
\end{equation}
\end{itemize}
\begin{rem*}
Note that the conditions \eqref{reLv1} and \eqref{reLv} for $\delta y$
can be condensed in just one condition: 
\begin{equation}
\int_{0}^{\tau}\delta y\left(s\right)\,ds=0.\quad\diamond\label{oc}
\end{equation}
\end{rem*}
It can be shown that, if $\gamma:\left[0,\tau\right]\rightarrow Q$
is a trajectory of $\left(Q,L\right)$, then 
\[
\left(x\left(t\right),y\left(t\right)\right)\coloneqq\left(\pi\left(\gamma\left(t\right)\right),\varpi\left(\dot{\gamma}\left(t\right)\right)\right)
\]
is a critical point of the reduced variational principle (see Eqs.
\eqref{raL}, \eqref{reLv1} and \eqref{reLv}). Conversely, given
a curve $\left(x,y\right)$ satisfying such a principle, and fixing
a $\varpi$-horizontal lift $h:\left[0,\tau\right]\rightarrow Q$
of $x$, then 
\[
\gamma\left(t\right)\coloneqq\psi\left(g\left(t\right),h\left(t\right)\right)
\]
is a trajectory of $\left(Q,L\right)$ if and only if $g\left(t\right)$
satisfies the \textbf{reconstruction equation}: 
\[
\dot{g}\left(t\right)=\left(L_{g\left(t\right)}\right)_{*,e}\left(y\left(t\right)\right).
\]
Since we are assuming that $G$ is abelian, we can solve this equation
up to quadratures (see Ref. \cite{mmr}). The general solution is
\[
g\left(t\right)=g_{0}\cdot\exp\left(\int_{0}^{t}y\left(s\right)\,\mathsf{d}s\right),
\]
where $g_{0}\in G$ and $\exp$ is the exponential map of $G$. As
a consequence, 
\begin{equation}
\gamma\left(t\right)=\psi\left(g_{0}\cdot e^{\int_{0}^{t}y\left(s\right)\,\mathsf{d}s},h\left(t\right)\right)=\psi\left(e^{\int_{0}^{t}y\left(s\right)\,\mathsf{d}s},\psi\left(g_{0},h\left(t\right)\right)\right).\label{gtif}
\end{equation}

\begin{rem*}
Moreover, given $q_{0}\in Q$ and $v_{0}\in T_{q_{0}}Q$, if there
exists a critical point $\left(x,y\right)$ such that $x\left(0\right)=\pi\left(q_{0}\right)$,
$\dot{x}\left(0\right)=\pi_{*}\left(v_{0}\right)$ and $y\left(0\right)=\varpi\left(v_{0}\right)$,
then we can choose $h$ and $g_{0}$ such that $\dot{\gamma}\left(0\right)=v_{0}$
(\textit{ipso facto} $\gamma\left(0\right)=q_{0}$). $\quad\diamond$ 
\end{rem*}
\bigskip{}

Summing up, given a Lagrangian system with a standard (abelian) symmetry,
there exists a reduced variational principle and a way to reconstruct
up to quadratures, from its critical points, the critical points of
the original variational principle (the Hamilton principle). In the
rest of the paper, we want to study if something similar can be done
in the context of \textit{scaling symmetries}. Before doing that,
let us see how the critical points of $\hat{A}$ can be described
in terms of ordinary differential equations.

\subsection{The Lagrange-Poincaré equations}

\label{LPe}

Consider an affine connection $\nabla$ on $T\left(Q/G\right)$, the
related diffeomorphism 
\[
\Omega:TT\left(Q/G\right)\rightarrow T\left(Q/G\right)\oplus T\left(Q/G\right)\oplus T\left(Q/G\right),
\]
and the linear isomorphisms $\Omega_{\dot{x}}$, for all $\dot{x}\in T\left(Q/G\right)$
(see Section \ref{affcon}). For the reduced Lagrangian $\ell:T\left(\left.Q\right/G\right)\times\mathfrak{g}\rightarrow\mathbb{R}$,
let us write $d_{1}\ell$ to indicate the differential with respect
to the factor $T\left(Q/G\right)$, i.e. 
\begin{equation}
\left\langle d\ell\left(\dot{x},y\right),\left(v,0\right)\right\rangle =\left\langle d_{1}\ell\left(\dot{x},y\right),v\right\rangle ,\quad\forall v\in T_{\dot{x}}T\left(Q/G\right),\label{dfb1}
\end{equation}
for all $y\in\mathfrak{g}$, and define 
\begin{equation}
\frac{\partial\ell}{\partial x}:T\left(Q/G\right)\times\mathfrak{g}\rightarrow T^{*}\left(Q/G\right)\label{dfb2}
\end{equation}
and 
\begin{equation}
\frac{\partial\ell}{\partial\dot{x}}:T\left(Q/G\right)\times\mathfrak{g}\rightarrow T^{*}\left(Q/G\right)\label{dfb3}
\end{equation}
through the equation 
\begin{equation}
\left(\Omega_{\dot{x}}^{*}\right)^{-1}\left(d_{1}\ell\left(\dot{x},y\right)\right)=\frac{\partial\ell}{\partial x}\left(\dot{x},y\right)\oplus\frac{\partial\ell}{\partial\dot{x}}\left(\dot{x},y\right).\label{dfb4}
\end{equation}
(These objects are similar to the base and fiber derivatives defined
in Eq. \eqref{fbd}, but with an additional \textit{parameter} $y$).
In these terms, it can be shown (see Ref. \cite{cmr}) that the pair
of curves $\left(x,y\right)$ is a critical point of the reduced variational
principle if and only if $\left(x,y\right)$ satisfies the \textbf{horizontal
Lagrange-Poincaré equation}\footnote{\textit{Recall we are assuming that the chosen principal connection
$\varpi$ is flat. This is why the curvature appearing in Ref. }\cite{cmr}\textit{
does not appear here.}} 
\begin{equation}
-\frac{D}{Dt}\frac{\partial\ell}{\partial\dot{x}}\left(\dot{x}\left(t\right),y\left(t\right)\right)+\frac{\partial\ell}{\partial x}\left(\dot{x}\left(t\right),y\left(t\right)\right)=0\label{ehor}
\end{equation}
and the \textbf{vertical Lagrange-Poincaré equation}\footnote{\textit{Recall also that we are assuming that $G$ is abelian.}}
\begin{equation}
-\frac{d}{dt}\frac{\partial\ell}{\partial y}\left(\dot{x}\left(t\right),y\left(t\right)\right)=0.\label{ever}
\end{equation}

Here $D/Dt$ denotes the covariant derivative of curves on $T^{*}\left(Q/G\right)$
related to the affine connection $\nabla$.\bigskip{}

At the end of the paper, we shall see that a set of equations with
a similar structure can be written in the context of scaling symmetries.

Note that if $(x^{i})$ are local coordinates on an open subset $\widehat{U}$
of $Q/G$, $(x^{i},\dot{x}^{i})$ are the corresponding local coordinates
on $\tau_{Q/G}^{-1}(\widehat{U}),$ $(y^{\alpha})$ are linear coordinates
on the flat Lie algebra ${\mathfrak{g}}$ induced by a basis of ${\mathfrak{g}}$,
and we take the local flat affine connection on $(\tau_{Q/G})_{|\tau_{Q}^{-1}(\widehat{U})}:\tau_{Q}^{-1}(\widehat{U})\to\widehat{U}$,
then Eqs. \eqref{ehor} and \eqref{ever} may be described as 
\[
\frac{d}{dt}\left(\frac{\partial\ell}{\partial\dot{x}}\right)-\frac{\partial\ell}{\partial x}=0,\;\;\;\frac{d}{dt}\left(\frac{\partial\ell}{\partial y}\right)=0.
\]

\section{Lagrangian systems with scaling symmetries}

\label{hls}

Given a Lagrangian system $\left(Q,L\right)$, if instead of having
a standard symmetry $\left(Q,L\right)$ has a scaling one, then the
Lagrangian function $L$ is not invariant, but homogeneous. In this
case, the following question arises: can we define some kind of \textit{reduced}
variational principle such that, from the critical points of this
principle, the trajectories of $\left(Q,L\right)$ can be reconstructed?
In the present section we give an affirmative answer to this question.
Moreover, we show that, as in the Hamiltonian side, the reconstruction
can be done up to one quadrature.

\subsection{Definitions and basic properties}

Given a manifold $Q$ and a Lagrangian function\footnote{We are not asking $L$ to be regular or hyperregular. So, the singular
Lagrangians are included in our study.} $L:TQ\rightarrow\mathbb{R}$, assume we have a principal action $\psi:\mathbb{R}^{+}\times Q\rightarrow Q$
(see Section \ref{paad}) of the multiplicative group of positive
real numbers $\mathbb{R}^{+}$, such that $L$ is \textbf{homogeneous
of degree 1} (or simply \textbf{homogeneous}, from now on) w.r.t.
the tangent lift of $\psi$, i.e. 
\[
L\left(\left(\psi_{\varsigma}\right)_{*,q}\left(v\right)\right)=\varsigma\,L\left(v\right),\quad\forall\varsigma\in\mathbb{R}^{+},\;q\in Q,\;v\in T_{q}Q.
\]

\begin{rem}
\label{posh} Note that we are actually dealing with \textit{positive
homogeneous }Lagrangians, in the sense that we are considering an
action of the positive reals $\mathbb{R}^{+}$, instead of an action
of the entire multiplicative group of real numbers $\mathbb{R}^{\times}\coloneqq\mathbb{R}-\left\{ 0\right\} $.
$\quad\diamond$ 
\end{rem}

Using that ${\mathbb{R}}^{+}$ is contractil, it follows that the
principal bundle $\pi:Q\to Q/{\mathbb{R}}^{+}$ is trivial (see, for
instance, \cite{St}), which implies the existence of a \textbf{scaling
function}: a positive homogeneous function w.r.t. $\psi$, i.e. a
function $f:Q\rightarrow\mathbb{R}^{+}$ such that 
\begin{equation}
f\left(\psi_{\varsigma}\left(q\right)\right)=\varsigma\,f\left(q\right),\quad\forall\varsigma\in\mathbb{R}^{+},\;q\in Q.\label{sff}
\end{equation}

\begin{rem*}
Actually, we have a scaling function for each global trivialization
$\Phi:Q\rightarrow\left(\left.Q\right/\mathbb{R}^{+}\right)\times\mathbb{R}^{+}$,
and it is given by the second component of the latter. And reciprocally,
given a scaling function, the map $\left(\pi,f\right)$ defines a
global trivialization. For later convenience, let us mention that
the inverse of $\left(\pi,f\right)$, 
\[
\left(\pi,f\right)^{-1}:\left(\left.Q\right/\mathbb{R}^{+}\right)\times\mathbb{R}^{+}\rightarrow Q,
\]
is given by the formula 
\begin{equation}
\left(\pi,f\right)^{-1}\left(\pi\left(q\right),\varsigma\right)=\psi\left(\frac{\varsigma}{f\left(q\right)},q\right).\quad\diamond\label{invt}
\end{equation}
\end{rem*}
In terms of the infinitesimal generator $\triangle$ of $\psi$, the
Eq. \eqref{sff} takes the form 
\[
\mathcal{L}_{\triangle}f=f,
\]
where $\mathcal{L}$ denotes the Lie derivative.

\bigskip{}

Given a scaling function $f$, we have the flat principal connection
$\varpi_{f}\coloneqq\mathsf{d}\ln f:TQ\rightarrow\mathbb{R}$ and
the related Atiyah diffeomorphism 
\[
\alpha_{f}:\left.TQ\right/\mathbb{R}^{+}\rightarrow T\left(\left.Q\right/\mathbb{R}^{+}\right)\times\mathbb{R},
\]
given by (see Eq. \eqref{adiff2}) 
\begin{equation}
\alpha_{f}\left(p\left(v_{q}\right)\right)=\left(\pi_{*}\left(v_{q}\right),\left(\mathsf{d}\ln f\left(v_{q}\right)\right)\right)=\left(\pi_{*}\left(v_{q}\right),\left(\frac{\mathsf{d}f\left(v_{q}\right)}{f\left(q\right)}\right)\right).\label{alfaf}
\end{equation}

\begin{rem*}
Given a curve $\gamma:\left[0,\tau\right]\rightarrow Q$, if we define
\begin{equation}
x\left(t\right)=\pi\left(\gamma\left(t\right)\right)\quad\textrm{and}\quad y\left(t\right)=\mathsf{d}\ln f\left(\dot{\gamma}\left(t\right)\right)=\frac{d\ln f\left(\gamma\left(t\right)\right)}{dt},\label{ito1}
\end{equation}
then 
\begin{equation}
\alpha_{f}\left(p\left(\dot{\gamma}\left(t\right)\right)\right)=\left(\dot{x}\left(t\right),y\left(t\right)\right).\quad\diamond\label{ito2}
\end{equation}
\end{rem*}

\subsection{Some examples}

\subsubsection{Kinetic Lagrangians}

Let $g$ be a Riemannian metric on $Q$ for which there exists a principal
action $\psi:\mathbb{R}^{+}\times Q\rightarrow Q$ such that, for
each $s\in\mathbb{R}^{+}$, the diffeomorphism $\psi_{s}:Q\rightarrow Q$
is a homothety with scale factor $s^{2}$, i.e. 
\[
g(\psi_{s}(q))\left(\left(\psi_{s}\right)_{*,q}\left(u_{q}\right),\left(\psi_{s}\right)_{*,q}\left(v_{q}\right)\right)=s^{2}\,g(q)\left(u_{q},v_{q}\right),
\]
for all $q\in Q$ and $u_{q},v_{q}\in T_{q}Q$. 
\begin{rem*}
A homothety $F:Q\to Q$ is a diffeomorphism such that 
\[
g(F(g))(F_{*,q}(u_{q}),F_{*,q}(v_{q}))=c_{F}\,g(q)(u_{q},v_{q}),\quad\forall q\in Q,\quad u_{q},v_{q}\in T_{q}Q,
\]
for some $c_{F}>0$. Denote $\mathcal{H}\left(Q,g\right)$ the group
of homotheties for $g$. Then, the map $\mathcal{H}\left(Q,g\right)\rightarrow\mathbb{R}^{+}$,
which sends every homothety to its scale factor, is a homomorphism.
It can be shown that the existence of the above mentioned action $\psi$
is equivalent to the existence of a right inverse of such a homomorphism.
In particular, if $\mathcal{I}\left(Q,g\right)\subseteq\mathcal{H}\left(Q,g\right)$
is the subgroup of the isometries for $g$ and $\mathcal{I}\left(Q,g\right)\rightarrow\mathcal{H}\left(Q,g\right)$
is the inclusion map, then the sequence 
\[
\left\{ id_{Q}\right\} \rightarrow\mathcal{I}\left(Q,g\right)\rightarrow\mathcal{H}\left(Q,g\right)\rightarrow\mathbb{R}^{+}
\]
is a short exact sequence (non-necessarily splitting). For instance,
for $Q=\mathbb{R}^{n}-\left\{ 0\right\} $ and $g$ a constant metric,
the multiplication by scalars defines a principal action $\psi$ as
described before. In this case, the previous exact sequence splits,
i.e. 
\[
\mathcal{H}\left(Q,g\right)\simeq\mathbb{R}^{+}\oplus\mathcal{I}\left(Q,g\right).\quad\diamond
\]
\end{rem*}
Now, consider the kinetic Lagrangian function $L=K_{g}:TQ\to{\mathbb{R}}$
given by 
\[
K_{g}(v)=g(v,v),\mbox{ for }v\in TQ,
\]
and the action $\widetilde{\psi}:{\mathbb{R}}^{+}\times Q\to Q$ defined
by 
\[
\widetilde{\psi}(r,q)=\psi(\sqrt{r},q),\mbox{ for }(r,q)\in{\mathbb{R}}^{+}\times Q.
\]
Then, $L$ is clearly homogeneous and, as it is well-known, the trajectories
of $(Q,L)$ are the geodesics of $g$. A natural scaling function
related to $\widetilde{\psi}$ is given by the formula 
\[
f\left(q\right)=g(q)\left(\triangle_{q},\triangle_{q}\right),
\]
where $\triangle$ is the infinitesimal generator for $\widetilde{\psi}$.

\subsubsection{Jacobi metrics}

\label{jm} For the same metric $g$ and action $\widetilde{\psi}$
as above, consider a function $V:Q\rightarrow\mathbb{R}$ such that
\[
V\left(\widetilde{\psi}_{s}\left(q\right)\right)=V\left(q\right),\quad\forall q\in Q,
\]
i.e. $V$ is invariant w.r.t. $\widetilde{\psi}$. Assume also that
$e\in\mathbb{R}^{+}$ is an upper bound for $V$. Then, the Lagrangian
$L={\left(e-V\circ\tau_{Q}\right)\,K_{g}}$ is homogeneous w.r.t.
the tangent lift of $\widetilde{\psi}$, and its trajectories of energy
$1$ are reparametrizations of the trajectories of energy $e$ of
the mechanical Lagrangian system $\left(Q,K_{g}-V\circ\tau_{Q}\right)$
(see, for instance, \cite{AbMa}).

\bigskip{}

A concrete example is given by $Q=\mathbb{R}^{+}\times\mathbb{R}^{+}$,
\begin{equation}
\widetilde{\psi}:\left(s,\left(a,b\right)\right)\mapsto\left(\sqrt{s}\,a,\sqrt{s}\,b\right),\label{sab}
\end{equation}
$g$ any constant metric on $\mathbb{R}^{2}$ and $V:\left(a,b\right)\in Q\mapsto v\left(a/b\right)$,
for any bounded function $v:\mathbb{R}^{+}\rightarrow\mathbb{R}$.

\subsubsection{Harmonic oscillator}

The Lagrangian function of an $n$-dimensional (isotropic) harmonic
oscillator is 
\[
L\left(\mathbf{q},\dot{\mathbf{q}}\right)=\frac{1}{2}\,M\,\left\Vert \dot{\mathbf{q}}\right\Vert ^{2}-\frac{1}{2}\,k\,\left\Vert \mathbf{q}\right\Vert ^{2},\quad\forall\mathbf{q},\dot{\mathbf{q}}\in\mathbb{R}^{n},
\]
where $\left\Vert \cdot\right\Vert $ denotes the Euclidean norm on
$\mathbb{R}^{n}$, and $M$ and $k$ are positive constants. As usual,
we are identifying $T\mathbb{R}^{n}$ and $\mathbb{R}^{n}\times\mathbb{R}^{n}$.
For $Q=\mathbb{R}^{n}-\left\{ 0\right\} $, the action 
\begin{equation}
\widetilde{\psi}:\left(s,\mathbf{q}\right)\in\mathbb{R}^{+}\times Q\mapsto\sqrt{s}\,\mathbf{q}\in Q\label{fsx}
\end{equation}
is principal and with related bundle 
\begin{equation}
\pi:\mathbf{q}\in Q\mapsto\frac{\mathbf{q}}{\left\Vert \mathbf{q}\right\Vert }\in\left.S^{n-1}\simeq Q\right/\mathbb{R}^{+},\label{ps}
\end{equation}
where $S^{n-1}$ is the unit sphere in ${\mathbb{R}}^{n}.$ It is
clear that the restriction $\tilde{L}:TQ\rightarrow\mathbb{R}$ of
$L$ is homogeneous. An example of scaling function $f:Q\rightarrow\mathbb{R}^{+}$
is given by 
\begin{equation}
f\left(\mathbf{q}\right)=\left\Vert \mathbf{q}\right\Vert ^{2}.\label{fx}
\end{equation}

\subsection{A candidate for the \textit{reduced action}}

\label{are}

As at the beginning of this section, suppose that we have a homogenous
Lagrangian function $L:TQ\rightarrow\mathbb{R}$ w.r.t. (the tangent
lift of) a principal action $\psi:\mathbb{R}^{+}\times Q\rightarrow Q$,
and fix a scaling function $f:Q\rightarrow\mathbb{R}^{+}$. In a similar
way as we have defined the reduced Lagrangian in the case of standard
symmetries, now, using $f$, the related diffeomorphism $\alpha_{f}$
(see Eq. \eqref{alfaf}) and the fact that the quotient $L/\left(f\circ\tau_{Q}\right)$
is invariant (w.r.t. the tangent lift of $\psi$), we can define the
function 
\[
\ell:T\left(\left.Q\right/\mathbb{R}^{+}\right)\times\mathbb{R}\rightarrow\mathbb{R}
\]
by the formula 
\begin{equation}
\ell\circ\alpha_{f}\circ p=\frac{L}{f\circ\tau_{Q}},\label{rel}
\end{equation}
and we shall also call it \textbf{reduced Lagrangian}. Note that,
using \eqref{ito2}, given a curve $\gamma:\left[0,\tau\right]\rightarrow Q$
we have the identity 
\[
\ell\left(\dot{x}\left(t\right),y\left(t\right)\right)=\frac{L\left(\dot{\gamma}\left(t\right)\right)}{f\left(\gamma\left(t\right)\right)},
\]
where $x:[0,\tau]\to Q/{\Bbb R}^{+}$ and $y:[0,\tau]\to{\Bbb R}$
are given by (\ref{ito1}).

In these terms, the action of $L$ for some $\tau>0$ (see Eq. \eqref{aL})
can be written as 
\[
A\left(\gamma\right)=\int_{0}^{\tau}L\left(\dot{\gamma}\left(t\right)\right)\,\mathsf{d}t=\int_{0}^{\tau}f\left(\gamma\left(t\right)\right)\,\frac{L\left(\dot{\gamma}\left(t\right)\right)}{f\left(\gamma\left(t\right)\right)}\,\mathsf{d}t=\int_{0}^{\tau}f\left(\gamma\left(t\right)\right)\,\ell\left(\dot{x}\left(t\right),y\left(t\right)\right)\,\mathsf{d}t.
\]
On the other hand, using the second part of \eqref{ito1}, we can
write 
\[
\begin{array}{lll}
f\left(\gamma\left(t\right)\right) & = & \exp\left(\ln\left(f\left(\gamma\left(t\right)\right)\right)\right)=\exp\left({\displaystyle \int_{0}^{t}\frac{d\ln\left(f\left(\gamma\left(s\right)\right)\right)}{\mathsf{d}s}}\,\mathsf{d}s+\ln\left(f\left(\gamma\left(0\right)\right)\right)\right)\\
\\
 & = & f\left(\gamma\left(0\right)\right)\,\exp\left({\displaystyle \int_{0}^{t}y\left(s\right)\,\mathsf{d}s}\right).
\end{array}
\]
As a consequence, we have that 
\begin{equation}
A\left(\gamma\right)=f\left(\gamma\left(0\right)\right)\,\check{A}\left(x,y\right),\label{paa}
\end{equation}
with 
\begin{equation}
\check{A}\left(x,y\right)\coloneqq\int_{0}^{\tau}\exp\left(\int_{0}^{t}y\left(s\right)\,\mathsf{d}s\right)\,\ell\left(\dot{x}\left(t\right),y\left(t\right)\right)\,\mathsf{d}t,\label{aLr}
\end{equation}
a functional on curves $\left\{ \left(x,y\right):\left[0,\tau\right]\rightarrow\left(\left.Q\right/\mathbb{R}^{+}\right)\times\mathbb{R}\right\} $.
Eq. \eqref{paa} says that the functionals $A$ and $\check{A}$ are
proportional. This suggests that $\check{A}$ could be a good candidate
for the reduced action we are looking for. To see that this is in
fact the case, we need the results of the next section. Among other
things, such results tell us what kind of infinitesimal variations
we must consider for the wanted reduced principle.

\subsection{The interrelated curves and variations}

For every curve $\gamma:\left[0,\tau\right]\rightarrow Q$, we have
through the diffeomorphism $\alpha_{f}$ (see Eq. \eqref{alfaf})
the curves 
\begin{equation}
x\left(t\right)=\pi\left(\gamma\left(t\right)\right)\quad\textrm{and}\quad y\left(t\right)=\mathsf{d}\ln f\left(\dot{\gamma}\left(t\right)\right)\label{cop}
\end{equation}
(see Eqs. \eqref{ito1} and \eqref{ito2}). Below, we show that any
pair of curves $x:\left[0,\tau\right]\rightarrow Q/\mathbb{R}^{+}$
and $y:\left[0,\tau\right]\rightarrow\mathbb{R}$ can be defined from
a unique curve $\gamma$ as in above equation, up to a scaling transformation. 
\begin{prop}
\label{p7} Consider three curves $\gamma:\left[0,\tau\right]\rightarrow Q$,
$x:\left[0,\tau\right]\rightarrow Q/\mathbb{R}^{+}$ and $y:\left[0,\tau\right]\rightarrow\mathbb{R}$,
and a number $\varsigma\in\mathbb{R}^{+}$. The following statements
are equivalent. 
\begin{enumerate}
\item $x$, $y$ and $\varsigma$ are given by 
\begin{equation}
x\left(t\right)=\pi\left(\gamma\left(t\right)\right),\quad y\left(t\right)=\mathsf{d}\ln f\left(\dot{\gamma}\left(t\right)\right)\quad\textrm{and}\quad\varsigma=f\left(\gamma\left(0\right)\right).\label{copp}
\end{equation}
\item $\gamma$ is given by 
\begin{equation}
\gamma\left(t\right)=\psi\left(e^{\int_{0}^{t}y\left(s\right)\,\mathsf{d}s},\left(\pi,f\right)^{-1}\left(x\left(t\right),\varsigma\right)\right).\label{cc}
\end{equation}
\end{enumerate}
\end{prop}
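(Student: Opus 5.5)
We will prove the two implications directly. The only tools needed are the homogeneity property \eqref{sff} of the scaling function $f$, the formula \eqref{invt} for $\left(\pi,f\right)^{-1}$, and the fact that $\pi\circ\psi_{\varsigma}=\pi$ for all $\varsigma\in\mathbb{R}^{+}$.

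For $1\Rightarrow2$, assume \eqref{copp} holds. As in the computation of Section \ref{are}, integrating $y(t)=\frac{d}{dt}\ln f(\gamma(t))$ gives
\[
f\left(\gamma\left(t\right)\right)=\varsigma\,e^{\int_{0}^{t}y\left(s\right)\,\mathsf{d}s}.
\]
Now put $q=\gamma(t)$ in \eqref{invt} and use $x(t)=\pi(\gamma(t))$. This yields
\[
\left(\pi,f\right)^{-1}\left(x\left(t\right),\varsigma\right)=\psi\left(\varsigma/f\left(\gamma\left(t\right)\right),\gamma\left(t\right)\right)=\psi\left(e^{-\int_{0}^{t}y\,\mathsf{d}s},\gamma\left(t\right)\right).
\]
Applying $\psi$ with the group element $e^{\int_{0}^{t}y\,\mathsf{d}s}$ and using that $\psi$ is an action, we obtain \eqref{cc}.

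For $2\Rightarrow1$, write $h(t):=\left(\pi,f\right)^{-1}\left(x\left(t\right),\varsigma\right)$. Since $\left(\pi,f\right)^{-1}$ is the inverse of $\left(\pi,f\right)$, we have $\pi(h(t))=x(t)$ and $f(h(t))=\varsigma$. From \eqref{cc}, $\gamma(t)=\psi\bigl(e^{\int_{0}^{t}y\,\mathsf{d}s},h(t)\bigr)$, so $\pi$-invariance gives $\pi(\gamma(t))=x(t)$, and \eqref{sff} gives
\[
f(\gamma(t))=\varsigma\,e^{\int_{0}^{t}y\,\mathsf{d}s}.
\]
At $t=0$ this gives $f(\gamma(0))=\varsigma$. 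Taking $\ln$ and differentiating gives $\mathsf{d}\ln f(\dot\gamma(t))=\frac{d}{dt}\ln f(\gamma(t))=y(t)$, which is \eqref{copp}.

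There is no real obstacle; the argument is bookkeeping. The one point to check is that the curve defined by \eqref{cc} is smooth, so that $\dot\gamma$ makes sense in the second implication. This holds because $\psi$, $\left(\pi,f\right)^{-1}$, $x$ and $t\mapsto\int_{0}^{t}y$ are all smooth.
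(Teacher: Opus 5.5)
Your proof is correct, but it is organized differently from the paper's. For $2\Rightarrow1$ you do what the paper does, namely verify that the curve \eqref{cc} satisfies \eqref{copp}. You obtain $f(\gamma(t))=\varsigma\,e^{\int_0^t y(s)\,\mathsf{d}s}$ in one line from the homogeneity \eqref{sff}. The paper gets the same identity by a detour through \eqref{invt}, rewriting $\gamma(t)$ as $(\pi,f)^{-1}\bigl(x(t),\varsigma\,e^{\int_0^t y(s)\,\mathsf{d}s}\bigr)$.

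The genuine difference is in $1\Rightarrow2$. The paper never inverts \eqref{copp} directly; instead it proves that solutions of \eqref{copp} are unique. It writes a second solution as $\lambda(t)=\psi(g(t),\gamma(t))$ and computes $\mathsf{d}\ln f(\dot\lambda)=\dot g/g+\mathsf{d}\ln f(\dot\gamma)$, which gives $\dot g=0$. Then $f(\lambda(0))=\varsigma=f(\gamma(0))$ gives $g\equiv1$. You instead reconstruct $\gamma$ explicitly. Integrating $y=\frac{d}{dt}\ln f(\gamma(t))$ recovers $f\circ\gamma$, the same computation the paper already performs in Section~\ref{are}, and \eqref{invt} with $q=\gamma(t)$ then undoes the scaling.

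Your route is shorter and fully constructive, and it loses nothing. The equivalence of 1 and 2 already forces every curve satisfying \eqref{copp} to be the curve \eqref{cc}, which is the uniqueness behind Definition~\ref{defi}. The paper's existence-plus-uniqueness argument, in exchange, makes the gauge structure visible: two curves with the same projection differ by a time-dependent scaling $g(t)$, and $y$ detects exactly its logarithmic derivative.
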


\begin{proof}
We shall show that, given $x$, $y$ and $\varsigma$, the unique
solution $\gamma$ for the Eq. \eqref{copp} is the curve \eqref{cc}.
On the one hand, if $\gamma$ is given by \eqref{cc}, then 
\begin{equation}
\pi\left(\gamma\left(t\right)\right)=\pi\left(\left(\pi,f\right)^{-1}\left(x\left(t\right),\varsigma\right)\right)=x\left(t\right).\label{cop1}
\end{equation}
On the other hand, using the identity (see \eqref{invt}) 
\[
\left(\pi,f\right)^{-1}\left(\pi\left(q\right),\varsigma\right)=\psi\left(\frac{\varsigma}{f\left(q\right)},q\right),
\]
from Eq. \eqref{cop1} we have that 
\[
\gamma\left(t\right)=\psi\left(e^{\int_{0}^{t}y\left(s\right)\,\mathsf{d}s},\psi\left(\frac{\varsigma}{f\left(\gamma\left(t\right)\right)},\gamma\left(t\right)\right)\right)=\psi\left(\frac{\varsigma\,e^{\int_{0}^{t}y\left(s\right)\,\mathsf{d}s}}{f\left(\gamma\left(t\right)\right)},\gamma\left(t\right)\right)=\left(\pi,f\right)^{-1}\left(x\left(t\right),\varsigma\,e^{\int_{0}^{t}y\left(s\right)\,\mathsf{d}s}\right),
\]
and consequently 
\[
f\left(\gamma\left(t\right)\right)=f\left(\left(\pi,f\right)^{-1}\left(x\left(t\right),\varsigma\,e^{\int_{0}^{t}y\left(s\right)\,\mathsf{d}s}\right)\right)=\varsigma\,e^{\int_{0}^{t}y\left(s\right)\,\mathsf{d}s}.
\]
Note in particular that $f\left(\gamma\left(0\right)\right)=\varsigma$.
Differentiating with respect to $t$, we have that 
\[
df\left(\dot{\gamma}\left(t\right)\right)=\varsigma\,e^{\int_{0}^{t}y\left(s\right)\,\mathsf{d}s}\;y\left(t\right)=f\left(\gamma\left(t\right)\right)\;y\left(t\right),
\]
so 
\[
y\left(t\right)=\frac{df\left(\dot{\gamma}\left(t\right)\right)}{f\left(\gamma\left(t\right)\right)}=\mathsf{d}\ln f\left(\dot{\gamma}\left(t\right)\right).
\]
Then, the curve \eqref{cc} is a solution of \eqref{copp}.

Suppose we have another solution $\lambda:\left[0,\tau\right]\rightarrow Q$.
Then, in particular, 
\begin{equation}
\pi\left(\lambda\left(t\right)\right)=\pi\left(\gamma\left(t\right)\right)\quad\textrm{and}\quad\mathsf{d}\ln f\left(\dot{\lambda}\left(t\right)\right)=\mathsf{d}\ln f\left(\dot{\gamma}\left(t\right)\right).\label{pdp}
\end{equation}
The first identity in \eqref{pdp} says that $\lambda\left(t\right)=\psi\left(g\left(t\right),\gamma\left(t\right)\right)$
for some function $g:\left[0,\tau\right]\rightarrow\mathbb{R}^{+}$.
Thus, 
\begin{equation}
f\left(\lambda\left(t\right)\right)=g\left(t\right)\,f\left(\gamma\left(t\right)\right),\label{rdfr}
\end{equation}
and consequently 
\[
\begin{array}{lll}
\mathsf{d}\ln f\left(\dot{\lambda}\left(t\right)\right) & = & {\displaystyle \frac{\mathsf{d}(f(\dot{\lambda}(t))}{f(\lambda(t))}=\frac{{\displaystyle \frac{d}{dt}(f(\lambda(t)))}}{f(\lambda(t))}=\frac{\dot{g}\left(t\right)\,f\left(\gamma\left(t\right)\right)+g\left(t\right){\displaystyle \frac{d}{dt}(f(\gamma(t)))}}{g(t)\,f(\gamma(t))}}\\
\\
 & = & {\displaystyle \frac{\dot{g}\left(t\right)}{g\left(t\right)}+\mathsf{d}\ln f\left(\dot{\gamma}\left(t\right)\right).}
\end{array}
\]
This implies, according to the second identity of \eqref{pdp}, that
$\dot{g}\left(t\right)=0$. On the other hand, because of \eqref{copp},
\[
f\left(\lambda\left(0\right)\right)=\varsigma=f\left(\gamma\left(0\right)\right).
\]
So, it follows from \eqref{rdfr} that $g\left(t\right)=1$ for all
$t$, and consequently $\lambda=\gamma$. 
\end{proof}
\begin{defn}
\label{defi} Consider three curves $\gamma$, $x$ and $y$, and
a positive number $\varsigma$, as in the previous proposition. We
shall say these four objects are \textbf{interrelated} if they satisfy
some of the equivalent conditions of the same proposition. 
\end{defn}

Now, let us study the relationship between the variations of a curve
$\gamma$ and those of its interrelated curves $x,y$. 
\begin{prop}
\label{p8} Consider three curves $\gamma:\left[0,\tau\right]\rightarrow Q$,
$x:\left[0,\tau\right]\rightarrow Q/\mathbb{R}^{+}$ and $y:\left[0,\tau\right]\rightarrow\mathbb{R}$,
and a number $\varsigma\in\mathbb{R}^{+}$, which are interrelated.
Consider also functions 
\[
\Gamma:\left(-\epsilon,\epsilon\right)\times\left[0,\tau\right]\rightarrow Q,\quad\Gamma_{x}:\left(-\epsilon,\epsilon\right)\times\left[0,\tau\right]\rightarrow Q/\mathbb{R}^{+},\quad\Gamma_{y}:\left(-\epsilon,\epsilon\right)\times\left[0,\tau\right]\rightarrow\mathbb{R}
\]
and 
\[
\chi:\left(-\epsilon,\epsilon\right)\rightarrow\mathbb{R}^{+}
\]
such that, for each $s\in\left(-\epsilon,\epsilon\right)$, the curves
$\Gamma\left(s,\cdot\right)$, $\Gamma_{x}\left(s,\cdot\right)$ and
$\Gamma_{y}\left(s,\cdot\right)$, and the number $\chi\left(s\right)\in\mathbb{R}^{+}$,
are interrelated, i.e. (for each $s$) 
\begin{equation}
\Gamma_{x}\left(s,t\right)=\pi\left(\Gamma\left(s,t\right)\right),\quad\Gamma_{y}\left(s,t\right)=\mathsf{d}\ln f\left(\frac{\partial}{\partial t}\Gamma\left(s,t\right)\right)\quad\textrm{and}\quad\chi\left(s\right)=f\left(\Gamma\left(s,0\right)\right),\label{gpg}
\end{equation}
or equivalently 
\begin{equation}
\Gamma\left(s,t\right)=\psi\left(e^{\int_{0}^{t}\Gamma_{y}\left(s,r\right)\,dr},\left(\pi,f\right)^{-1}\left(\Gamma_{x}\left(s,t\right),\chi\left(s\right)\right)\right).\label{gp9}
\end{equation}
Then, 
\begin{equation}
\delta\gamma\left(t\right)=\left.\frac{\partial}{\partial s}\right|_{s=0}\Gamma\left(s,t\right)\label{cg}
\end{equation}
is a variation of $\gamma$ satisfying \eqref{eLv} if and only if
\begin{equation}
\delta x\left(t\right)=\left.\frac{\partial}{\partial s}\right|_{s=0}\Gamma_{x}\left(s,t\right)\quad\textrm{and }\quad\delta y\left(t\right)=\left.\frac{\partial}{\partial s}\right|_{s=0}\Gamma_{y}\left(s,t\right)\label{ggy}
\end{equation}
are variations of $x$ and $y$, respectively, satisfying 
\begin{equation}
\delta x\left(0\right),\delta x\left(\tau\right)\in\mathsf{O}_{Q/\mathbb{R}^{+}}\quad\textrm{and}\quad\int_{0}^{\tau}\delta y\left(t\right)\,\mathsf{d}t=0,\label{eLvr}
\end{equation}
and also 
\begin{equation}
\chi\left(0\right)=\varsigma,\quad\chi'\left(0\right)=0.\label{fsf}
\end{equation}
\end{prop}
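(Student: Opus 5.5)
The plan is to read the endpoint conditions off the two equivalent descriptions \eqref{gpg} and \eqref{gp9}, using the global trivialization $(\pi,f)$ of Remark after \eqref{sff}.

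\emph{Preliminary remark.} The condition $\chi(0)=\varsigma$ holds automatically, since $\Gamma(0,\cdot)=\gamma$ and $f(\gamma(0))=\varsigma$ by interrelatedness. So the real content of \eqref{fsf} is $\chi'(0)=0$.

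\emph{Step 1: transport the problem to the trivialization.} I will use the diffeomorphism $(\pi,f):Q\to (Q/\mathbb{R}^{+})\times\mathbb{R}^{+}$. A tangent vector $v\in T_qQ$ is zero if and only if both $\pi_{*}(v)=0$ and $\mathsf{d}f(v)=0$. By \eqref{gpg} we have $\pi(\Gamma(s,t))=\Gamma_{x}(s,t)$. By the computation in the proof of Proposition \ref{p7}, we also have
\[
f(\Gamma(s,t))=\chi(s)\,e^{\int_{0}^{t}\Gamma_{y}(s,r)\,dr}.
\]
Differentiating both identities in $s$ at $s=0$ gives
\[
\pi_{*}(\delta\gamma(t))=\delta x(t),\qquad \frac{\mathsf{d}f(\delta\gamma(t))}{f(\gamma(t))}=\frac{\chi'(0)}{\varsigma}+\int_{0}^{t}\delta y(r)\,dr .
\]
Here I use $\chi(0)=\varsigma$ and the smoothness of $\Gamma_{y}$, which allows differentiation under the integral sign.

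\emph{Step 2: read off the endpoint conditions.} Evaluate the two identities of Step 1 at $t=0$ and at $t=\tau$.
\begin{itemize}
\item $\delta\gamma(0)=0$ is equivalent to $\delta x(0)=0$ together with $\chi'(0)=0$.
\item $\delta\gamma(\tau)=0$ is equivalent to $\delta x(\tau)=0$ together with $\chi'(0)/\varsigma+\int_0^\tau\delta y=0$.
\end{itemize}
Combining the two items, \eqref{eLv} holds exactly when \eqref{eLvr} and $\chi'(0)=0$ hold. This proves both directions at once.

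It remains to check that $\delta x$ and $\delta y$ are genuine variations of $x$ and $y$. This follows immediately from $\Gamma_{x}(0,\cdot)=x$ and $\Gamma_{y}(0,\cdot)=y$, which in turn follow from interrelatedness at $s=0$ and the uniqueness part of Proposition \ref{p7}.

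\emph{Expected obstacle.} The main point is to justify the formula for $f(\Gamma(s,t))$ for every $s$. This follows by applying Proposition \ref{p7} to each $s$ separately, exactly as in its proof. Once that formula is in hand, the remaining step is the bookkeeping of the two constant terms $\chi'(0)$ and $\int_0^\tau\delta y$ at the endpoints. Everything else is linear algebra on $T_qQ\cong T_{\pi(q)}(Q/\mathbb{R}^{+})\times\mathbb{R}$.
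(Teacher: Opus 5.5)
Your argument is correct. It uses the same two identities as the paper, namely $\pi_*(\delta\gamma(t))=\delta x(t)$ and $\ln f(\Gamma(s,t))=\ln\chi(s)+\int_0^t\Gamma_y(s,r)\,dr$; what differs is how the two implications are organized. The paper proves them separately. The forward implication comes from the fundamental theorem of calculus applied to $\ln f\circ\Gamma(s,\cdot)$, which gives $\int_0^\tau\delta y=\mathsf{d}\ln f(\delta\gamma(\tau))-\mathsf{d}\ln f(\delta\gamma(0))$ and $\chi'(0)=\mathsf{d}f(\delta\gamma(0))$. The converse comes from differentiating \eqref{gp9} in $s$. This produces an explicit formula for $\delta\gamma(t)$: a push-forward of $(\delta x(t),\chi'(0))$ through $(\pi,f)^{-1}$ and $\psi$, plus an infinitesimal-generator term proportional to $\int_0^t\delta y$. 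Its vanishing at $t=0$ and $t=\tau$ is then checked directly. You instead exploit that $(\pi,f)$ is a global trivialization, so $\delta\gamma(t)=0$ if and only if $\pi_*\delta\gamma(t)=0$ and $\mathsf{d}f(\delta\gamma(t))=0$. You then read off both implications at once from $\mathsf{d}f(\delta\gamma(t))/f(\gamma(t))=\chi'(0)/\varsigma+\int_0^t\delta y$. Your route is shorter, avoids differentiating the action and the inverse trivialization, and shows exactly which condition controls which endpoint: $\chi'(0)=0$ handles $t=0$, and then $\int_0^\tau\delta y=0$ handles $t=\tau$. The paper's converse, in exchange, exhibits the reconstructed variation $\delta\gamma$ explicitly. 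One small bookkeeping fix is needed. In the converse direction you must first obtain $\Gamma(0,\cdot)=\gamma$ from $\Gamma_x(0,\cdot)=x$, $\Gamma_y(0,\cdot)=y$ and the hypothesis $\chi(0)=\varsigma$, and this is where the uniqueness part of Proposition \ref{p7} is actually needed. Your preliminary remark that $\chi(0)=\varsigma$ is automatic presupposes $\Gamma(0,\cdot)=\gamma$, so it only applies to the forward direction. In that direction, conversely, you do not need uniqueness, since $\Gamma_x(0,\cdot)=x$ and $\Gamma_y(0,\cdot)=y$ follow directly from \eqref{gpg}.
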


\begin{proof}
Suppose that $\Gamma$ defines a variation $\delta\gamma$ of $\gamma$
satisfying \eqref{eLv}. Consider the functions $\Gamma_{x}$ and
$\Gamma_{y}$ given by \eqref{gpg}. It is easy to see from \eqref{cg}
and \eqref{ggy} that $\delta x\left(t\right)=\pi_{*}\left(\delta\gamma\left(t\right)\right)$,
and from \eqref{copp} that $\delta x$ is a variation of $x$. The
fact that $\delta x$ fulfills \eqref{eLvr} is immediate from \eqref{eLv}.
On the other hand (see \eqref{gpg} and \eqref{ggy} again), writing
$\lambda=\ln f$ for simplicity, 
\[
\begin{array}{lll}
{\displaystyle \int_{0}^{\tau}\delta y\left(t\right)\,\mathsf{d}t} & = & {\displaystyle \int_{0}^{\tau}\left.\frac{\partial}{\partial s}\right|_{s=0}\mathsf{d}\lambda\left(\frac{\partial}{\partial t}\Gamma\left(s,t\right)\right)\,\mathsf{d}t=\left.\frac{\partial}{\partial s}\right|_{s=0}\int_{0}^{\tau}\frac{\partial}{\partial t}\lambda\left(\Gamma\left(s,t\right)\right)\,\mathsf{d}t}\\
\\
 & = & {\displaystyle \left.\frac{\partial}{\partial s}\right|_{s=0}\left(\lambda\left(\Gamma\left(s,\tau\right)\right)-\lambda\left(\Gamma\left(s,0\right)\right)\right)=\mathsf{d}\lambda\left(\delta\gamma\left(\tau\right)\right)-\mathsf{d}\lambda\left(\delta\gamma\left(0\right)\right)=0},
\end{array}
\]
where the last equality is a consequence of \eqref{eLv}. Also, since
\[
\Gamma_{y}\left(0,t\right)=\mathsf{d}\lambda\left(\frac{\partial}{\partial t}\Gamma\left(0,t\right)\right)=\mathsf{d}\lambda\left(\dot{\gamma}\left(t\right)\right)=y\left(t\right)
\]
(see \eqref{copp}), we have that $\delta y$ is a variation of $y$
fulfilling \eqref{eLvr}. Finally, since 
\[
\chi(0)=f\left(\Gamma\left(0,0\right)\right)=f\left(\gamma\left(0\right)\right)=\varsigma
\]
and 
\[
\chi'(0)=\left.\frac{\partial}{\partial s}\right|_{s=0}f\left(\Gamma\left(s,0\right)\right)=\mathsf{d}f\left(\delta\gamma\left(0\right)\right)=0,
\]
Eq. \eqref{fsf} follows.

Now, let us show the converse. Suppose that $\Gamma_{x}$ and $\Gamma_{y}$
define variations $\delta x$ and $\delta y$ satisfying \eqref{eLvr},
and $\chi$ is a function satisfying \eqref{fsf}. We want to show
that the function $\Gamma$ given by \eqref{gp9} defines a variation
$\delta\gamma$ of $\gamma$ satisfying \eqref{eLv}. It is clear
that $\Gamma\left(0,t\right)=\gamma\left(t\right)$ for all $t\in\left[0,\tau\right]$
(recall \eqref{cc} and the fact that $\chi\left(0\right)=\varsigma$).
On the other hand, since 
\[
\left.\frac{\partial}{\partial s}\right|_{s=0}\Gamma\left(s,t\right)=\left(\psi_{e^{\int_{0}^{t}y\left(r\right)\,dr}}\right)_{*}\left(\left(\pi,f\right)_{*}^{-1}\left(\delta x\left(t\right),\chi'\left(0\right)\right)\right)+\left(\psi_{\left(\pi,f\right)^{-1}\left(x\left(t\right),\varsigma\right)}\right)_{*}\left(e^{\int_{0}^{t}y\left(r\right)\,dr}\,\int_{0}^{t}\delta y\left(r\right)\,dr\right),
\]
if we replace in the last expression $t$ by $0$ or $\tau$, and
use that $\chi'\left(0\right)=0$, we have from \eqref{eLvr} that
Eq. \eqref{eLv} holds, which ends our proof. 
\end{proof}
The previous result suggests that the variations which we need for
the desired reduced variational principle must satisfy \eqref{eLvr}. 
\begin{rem*}
Note that the condition for the variation $\delta y$ appearing in
Eq. \eqref{eLvr} is the same as that appearing for standard (abelian)
symmetries (see Eq. \eqref{oc}). $\quad\diamond$ 
\end{rem*}

\subsection{The reduction and reconstruction processes}

Now, let us study the relationship between the critical points of
$A$ and $\check{A}$ (see Eqs. \eqref{aL} and \eqref{aLr}) w.r.t.
appropriate variations. 
\begin{prop}
\label{p11} Suppose we have curves $\gamma$, $x$ and $y$, and
a positive number $\varsigma$, which are interrelated. Then, 
\begin{equation}
A\left(\gamma\right)=\varsigma\,\check{A}\left(x,y\right).\label{ep11}
\end{equation}
\end{prop}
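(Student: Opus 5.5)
The plan is to redo, now in the language of interrelated objects, the computation of Section \ref{are} that led to Eq. \eqref{paa}. By Definition \ref{defi} and Proposition \ref{p7}, the interrelatedness of $\gamma$, $x$, $y$ and $\varsigma$ means precisely that Eq. \eqref{copp} holds: $x=\pi\circ\gamma$, $y(t)=\mathsf{d}\ln f(\dot\gamma(t))$ and $\varsigma=f(\gamma(0))$. The first two conditions are the relations \eqref{ito1}, so by \eqref{ito2} we get $\alpha_{f}(p(\dot\gamma(t)))=(\dot x(t),y(t))$. Combining this with the definition \eqref{rel} of the reduced Lagrangian gives the pointwise identity
\[
L\left(\dot\gamma(t)\right)=f\left(\gamma(t)\right)\,\ell\left(\dot x(t),y(t)\right),\quad\forall t\in[0,\tau].
\]

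Next I will express $f(\gamma(t))$ in terms of $y$ and $\varsigma$. Since $y(t)=\frac{d}{dt}\ln f(\gamma(t))$, integrating from $0$ to $t$ and using $f(\gamma(0))=\varsigma$ gives $f(\gamma(t))=\varsigma\,e^{\int_{0}^{t}y(s)\,\mathsf{d}s}$. This identity was also obtained directly inside the proof of Proposition \ref{p7}, from the representation \eqref{cc}, and either route can be cited.

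Substituting both facts into the action \eqref{aL} and pulling out the constant $\varsigma$ gives
\[
A(\gamma)=\int_{0}^{\tau}\varsigma\,e^{\int_{0}^{t}y(s)\,\mathsf{d}s}\,\ell\left(\dot x(t),y(t)\right)\mathsf{d}t=\varsigma\,\check{A}(x,y),
\]
by the definition \eqref{aLr}. This is exactly \eqref{ep11}.

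I do not expect a real obstacle. The only point needing care is that the identity $\ell\circ\alpha_{f}\circ p=L/(f\circ\tau_{Q})$ is well defined, which rests on the $\mathbb{R}^{+}$-invariance of $L/(f\circ\tau_{Q})$ already noted in Section \ref{are}.
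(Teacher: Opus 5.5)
Your proposal is correct and takes the same approach as the paper. The paper cites the computation of Section \ref{are} that gives $A(\gamma)=f(\gamma(0))\,\check{A}(x,y)$ and then substitutes $f(\gamma(0))=\varsigma$ from \eqref{copp}; you re-derive that computation explicitly before making the same substitution.
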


\begin{proof}
We have shown at Section \ref{are} that $A\left(\gamma\right)=f\left(\gamma\left(0\right)\right)\,\check{A}\left(x,y\right)$,
if $x\left(t\right)=\pi\left(\gamma\left(t\right)\right)$ and $y\left(t\right)=\mathsf{d}\ln f\left(\dot{\gamma}\left(t\right)\right)$.
If in addition $f\left(\gamma\left(0\right)\right)=\varsigma$ (see
\eqref{copp}), we have the result that we wanted. 
\end{proof}
\begin{rem}
\label{rsg} Note that, if the curves $\Gamma\left(s,\cdot\right)$,
$\Gamma_{x}\left(s,\cdot\right)$ and $\Gamma_{y}\left(s,\cdot\right)$,
and a number $\chi\left(s\right)\in\mathbb{R}^{+}$, are interrelated
for each $s$, then, according to the last Proposition, 
\begin{equation}
A\left(\Gamma\left(s,\cdot\right)\right)=\chi\left(s\right)\,\check{A}\left(\Gamma_{x}\left(s,\cdot\right),\Gamma_{y}\left(s,\cdot\right)\right).\quad\diamond\label{sgn}
\end{equation}
\end{rem}

Now, we can enunciate and prove the main result of the paper. 
\begin{thm}
\label{t3} Consider curves $\gamma$, $x$ and $y$, and a number
$\varsigma\in\mathbb{R}^{+}$, which are interrelated. Then, the curve
$\gamma$ is a critical point of the functional 
\[
A\left(\gamma\right)=\int_{0}^{\tau}L\left(\dot{\gamma}\left(t\right)\right)\,\mathsf{d}t
\]
with respect to variations satisfying 
\[
\delta\gamma\left(0\right),\delta\gamma\left(\tau\right)\in\mathsf{O}_{Q},
\]
if and only if the curves $x$ and $y$ define a critical point of
the functional 
\begin{equation}
\check{A}\left(x,y\right)=\int_{0}^{\tau}\exp\left(\int_{0}^{t}y\left(s\right)\,\mathsf{d}s\right)\,\ell\left(\dot{x}\left(t\right),y\left(t\right)\right)\,\mathsf{d}t\label{rvp1}
\end{equation}
with respect to variations satisfying 
\begin{equation}
\delta x\left(0\right),\delta x\left(\tau\right)\in\mathsf{O}_{Q/\mathbb{R}^{+}}\quad\textrm{and}\quad\int_{0}^{\tau}\delta y\left(s\right)\,\mathsf{d}s=0.\label{rv1}
\end{equation}
\end{thm}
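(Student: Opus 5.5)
The argument will combine Proposition \ref{p8} (the correspondence between admissible variations) with Remark \ref{rsg} (the scaling identity \eqref{sgn} between the two actions along a one-parameter family). The plan is to differentiate \eqref{sgn} at $s=0$ and use $\chi(0)=\varsigma$, $\chi'(0)=0$ to kill the extra term.

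\emph{Direction ($\Rightarrow$).} Assume $\gamma$ is critical for $A$. Take any variations $\delta x,\delta y$ of $x,y$ satisfying \eqref{rv1}, realized by functions $\Gamma_x,\Gamma_y$ as in \eqref{dcg}. Set $\chi(s)\equiv\varsigma$ and define $\Gamma$ by \eqref{gp9}. Then for each $s$ the curves $\Gamma(s,\cdot),\Gamma_x(s,\cdot),\Gamma_y(s,\cdot)$ and the number $\chi(s)$ are interrelated by Proposition \ref{p7}. Since \eqref{fsf} holds trivially, the converse part of Proposition \ref{p8} tells us that $\Gamma$ defines a variation $\delta\gamma$ of $\gamma$ satisfying \eqref{eLv}. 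Differentiating \eqref{sgn} at $s=0$ gives
\[
0=\delta A(\gamma)=\chi'(0)\,\check{A}(x,y)+\chi(0)\,\delta\check{A}(x,y)=\varsigma\,\delta\check{A}(x,y).
\]
Because $\varsigma>0$, we get $\delta\check A(x,y)=0$.

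\emph{Direction ($\Leftarrow$).} Assume $(x,y)$ is critical for $\check A$ with respect to \eqref{rv1}. Take any $\Gamma$ realizing a variation $\delta\gamma$ with \eqref{eLv}, and define $\Gamma_x,\Gamma_y,\chi$ via \eqref{gpg}. These are interrelated for each $s$, so the direct part of Proposition \ref{p8} shows that $\delta x,\delta y$ satisfy \eqref{rv1} and that $\chi(0)=\varsigma$, $\chi'(0)=0$. Differentiating \eqref{sgn} at $s=0$ again gives $\delta A(\gamma)=\varsigma\,\delta\check A(x,y)+0=0$.

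\emph{Main obstacle.} The one delicate point is well-definedness. The paper's convention is that $\delta\mathfrak F$ depends only on the infinitesimal variation and not on the chosen $\Gamma$. With that convention, the derivative of \eqref{sgn} legitimately computes $\delta A(\gamma)$ and $\delta\check A(x,y)$ for the specific variations produced. In the ($\Rightarrow$) direction, every admissible pair $(\delta x,\delta y)$ must arise this way; it does, because we started from arbitrary $\Gamma_x,\Gamma_y$. In the ($\Leftarrow$) direction, every admissible $\delta\gamma$ arises because we started from arbitrary $\Gamma$. Beyond this, the only technical points are smoothness of $\Gamma$ from \eqref{gp9}, which is clear, and differentiating under the integral sign, which is routine.
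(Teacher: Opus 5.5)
Your proposal is correct and matches the paper's proof: it builds $\Gamma$ from $\Gamma_x,\Gamma_y,\chi$ via \eqref{gp9} (and, conversely, $\Gamma_x,\Gamma_y,\chi$ from $\Gamma$ via \eqref{gpg}), invokes Proposition \ref{p8}, and differentiates \eqref{sgn} at $s=0$ using $\chi(0)=\varsigma$, $\chi'(0)=0$. Your choice $\chi\equiv\varsigma$ is a special case of the paper's arbitrary $\chi$ satisfying \eqref{fsf}, and your converse direction writes out explicitly what the paper leaves as ``similar.''
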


\begin{proof}
Suppose that $\gamma:\left[0,\tau\right]\rightarrow Q$ is a critical
point of $A$, define $\varsigma\coloneqq f\left(\gamma\left(0\right)\right)$
and consider the curves $x$ and $y$ given by \eqref{cop}. In particular,
we are saying that $\gamma$, $x$, $y$ and $\varsigma$ are interrelated.
Now, consider arbitrary variations $\delta x$ and $\delta y$ of
$x$ and $y$, satisfying \eqref{eLvr}, given by functions $\Gamma_{x}$
and $\Gamma_{y}$, and consider also a function $\chi$ satisfying
\eqref{fsf}. Define with them a function $\Gamma$ as in Eq. \eqref{gp9}.
On the one hand, according to Remark \ref{rsg}, we have that Eq.
\eqref{sgn} holds. And, on the other hand, according to Proposition
\ref{p8}, $\Gamma$ defines a variation $\delta\gamma$ of $\gamma$
that satisfies \eqref{eLv}. Then, 
\[
0=\delta A\left(\gamma\right)=\left.\frac{\partial}{\partial s}\right|_{s=0}A\left(\Gamma\left(s,\cdot\right)\right)=\left.\frac{\partial}{\partial s}\right|_{s=0}\chi\left(s\right)\,\check{A}\left(\Gamma_{x}\left(s,\cdot\right),\Gamma_{y}\left(s,\cdot\right)\right)=\varsigma\,\delta\check{A}\left(x,y\right),
\]
implying that $x$ and $y$ are critical for $\check{A}$ w.r.t. variations
satisfying \eqref{eLvr}. The reciprocal can be proved in a similar
way. 
\end{proof}
Summing up, above result defines a \textbf{reduced variational principle}
for the homogeneous Lagrangian system $\left(Q,L\right)$ with action
$\psi$ and scaling function $f$: the \textbf{reduced action} is
given by \eqref{rvp1} and the \textbf{reduced variations} must satisfy
\eqref{rv1}. Also, the result gives us a \textbf{reconstruction process}.
In fact, given a critical point $\left(x,y\right)$ of the reduced
principle and any positive number $\varsigma$, the interrelated curve
(compare with \eqref{gtif}) 
\begin{equation}
\gamma\left(t\right)=\psi\left(e^{\int_{0}^{t}y\left(s\right)\,\mathsf{d}s},\left(\pi,f\right)^{-1}\left(x\left(t\right),\varsigma\right)\right)\label{recp}
\end{equation}
is a critical point of the Hamilton principle for $L$, i.e. a trajectory
of our homogeneous Lagrangian system $\left(Q,L\right)$. 
\begin{rem*}
Moreover, given $q_{0}\in Q$ and $v_{0}\in T_{q_{0}}Q$, if there
exists a critical point $\left(x,y\right)$ such that $x\left(0\right)=\pi\left(q_{0}\right)$,
$\dot{x}\left(0\right)=\pi_{*}\left(v_{0}\right)$ and $y\left(0\right)=d(\ln f)(v_{0})=\left.\mathsf{d}f\left(v_{0}\right)\right/{\displaystyle f(q_{0})}$,
then, choosing $\varsigma=f\left(q_{0}\right)$, it can be shown that
$\dot{\gamma}\left(0\right)=v_{0}$ (\textit{ipso facto} $\gamma\left(0\right)=q_{0}$).
$\quad\diamond$ 
\end{rem*}
All of this is, precisely, what we have been looking for.

\subsection{The \textit{scaling}-Lagrange-Poincaré equations}

In this section we shall study the critical points of the variational
principle given by \eqref{rvp1} and \eqref{rv1}. We shall see they
satisfy a set of ordinary differential equations with a similar structure
to the Lagrange-Poincaré equations (see Section \ref{LPe}). To do
that, consider an affine connection $\nabla$ on $T\left(Q/\mathbb{R}^{+}\right)$,
the related diffeomorphism 
\[
\Omega:TT\left(Q/\mathbb{R}^{+}\right)\rightarrow T\left(Q/\mathbb{R}^{+}\right)\oplus T\left(Q/\mathbb{R}^{+}\right)\oplus T\left(Q/\mathbb{R}^{+}\right)
\]
and the linear isomorphisms $\Omega_{\dot{x}}$, for all $\dot{x}\in T\left(Q/\mathbb{R}^{+}\right)$
(see Section \ref{affcon}). For the reduced Lagrangian $\ell:T\left(Q/\mathbb{R}^{+}\right)\times\mathbb{R}\rightarrow\mathbb{R}$,
repeat the definitions made along the Eqs. \eqref{dfb1} to \eqref{dfb4},
but for $G=\mathbb{R}^{+}$. With this notation, we have the next
theorem. 
\begin{thm}
\label{t35} The curves $x$ and $y$ define a critical point of the
functional \eqref{rvp1} with respect to variations satisfying \eqref{rv1}
if and only if they satisfy the \textbf{horizontal scaling-Lagrange-Poincaré}
\textbf{equation} 
\begin{equation}
-\frac{D}{Dt}\frac{\partial\ell}{\partial\dot{x}}\left(\dot{x}\left(t\right),y\left(t\right)\right)-y\left(t\right)\,\frac{\partial\ell}{\partial\dot{x}}\left(\dot{x}\left(t\right),y\left(t\right)\right)+\frac{\partial\ell}{\partial x}\left(\dot{x}\left(t\right),y\left(t\right)\right)=0\label{hor}
\end{equation}
and the \textbf{vertical scaling-Lagrange-Poincaré} \textbf{equation}
\begin{equation}
-\frac{d}{dt}\frac{\partial\ell}{\partial y}\left(\dot{x}\left(t\right),y\left(t\right)\right)-y\left(t\right)\,\frac{\partial\ell}{\partial y}\left(\dot{x}\left(t\right),y\left(t\right)\right)+\ell\left(\dot{x}\left(t\right),y\left(t\right)\right)=0,\label{ver}
\end{equation}
for all $t\in\left(0,\tau\right)$. 
\end{thm}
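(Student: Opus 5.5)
We compute the first variation of $\check{A}$ directly. Write $E(t)\coloneqq\exp\left(\int_{0}^{t}y(s)\,\mathsf{d}s\right)$, so that $\dot{E}=yE$ and
\[
\delta E(t)=E(t)\int_{0}^{t}\delta y(s)\,\mathsf{d}s\eqqcolon E(t)\,\eta(t).
\]
Here $\eta(0)=0$, and by \eqref{rv1} also $\eta(\tau)=0$, with $\dot{\eta}=\delta y$. Conversely, any $\eta$ with $\eta(0)=\eta(\tau)=0$ gives an admissible $\delta y=\dot{\eta}$, exactly as in the Remark leading to \eqref{oc}. Using \eqref{dfb1}--\eqref{dfb4} for $G=\mathbb{R}^{+}$, together with \eqref{Ov} applied to the variation $\delta x$ of $x$, we have
\[
\delta\ell\left(\dot{x},y\right)=\left\langle \frac{\partial\ell}{\partial x},\delta x\right\rangle +\left\langle \frac{\partial\ell}{\partial\dot{x}},\frac{D}{Dt}\delta x\right\rangle +\frac{\partial\ell}{\partial y}\,\dot{\eta}.
\]
We assume, as in the cited Lagrange-Poincaré setting of Ref. \cite{cg,cmr}, that $\nabla$ is torsion-free, so that $\frac{D}{Dt}\delta x$ is the correct second slot. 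Hence
\[
\delta\check{A}=\int_{0}^{\tau}\Big(E\,\eta\,\ell+E\left\langle \tfrac{\partial\ell}{\partial x},\delta x\right\rangle +E\left\langle \tfrac{\partial\ell}{\partial\dot{x}},\tfrac{D}{Dt}\delta x\right\rangle +E\,\tfrac{\partial\ell}{\partial y}\,\dot{\eta}\Big)\,\mathsf{d}t .
\]

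Next we integrate by parts in the two derivative terms. For the horizontal term we use the Leibniz rule for the dual connection,
\[
\frac{d}{dt}\left\langle \alpha,\delta x\right\rangle =\left\langle \frac{D}{Dt}\alpha,\delta x\right\rangle +\left\langle \alpha,\frac{D}{Dt}\delta x\right\rangle ,
\qquad \alpha=E\,\frac{\partial\ell}{\partial\dot{x}},
\]
together with $\frac{D}{Dt}(E\alpha_{0})=E\left(y\,\alpha_{0}+\frac{D}{Dt}\alpha_{0}\right)$ for $\alpha_{0}=\frac{\partial\ell}{\partial\dot{x}}$. The boundary terms vanish because $\delta x(0)=\delta x(\tau)=0$. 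For the vertical term we have $\frac{d}{dt}\left(E\,\frac{\partial\ell}{\partial y}\right)=E\left(y\,\frac{\partial\ell}{\partial y}+\frac{d}{dt}\frac{\partial\ell}{\partial y}\right)$, and the boundary terms vanish because $\eta(0)=\eta(\tau)=0$. This yields
\[
\delta\check{A}=\int_{0}^{\tau}E\left\langle -\tfrac{D}{Dt}\tfrac{\partial\ell}{\partial\dot{x}}-y\tfrac{\partial\ell}{\partial\dot{x}}+\tfrac{\partial\ell}{\partial x},\delta x\right\rangle \mathsf{d}t+\int_{0}^{\tau}E\,\eta\left(-\tfrac{d}{dt}\tfrac{\partial\ell}{\partial y}-y\tfrac{\partial\ell}{\partial y}+\ell\right)\mathsf{d}t .
\]

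Sufficiency of \eqref{hor}--\eqref{ver} is now immediate. For necessity, note that $\delta x$ (with fixed endpoints) and $\eta$ (vanishing at the endpoints) can be chosen independently and arbitrarily. Since $E>0$, the fundamental lemma of the calculus of variations gives both equations on $(0,\tau)$; for $\delta x$ this is applied locally via bump-supported sections.

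The main point requiring care is the precise form of $\delta\check{A}$. The nonlocal factor $E$ varies, producing the term $E\,\eta\,\ell$, and this must be combined with the $\dot{\eta}$ term to get the clean vertical equation with the $+\ell$. We must also check that condition \eqref{rv1} is exactly equivalent to freedom of $\eta$ with zero boundary values. The remaining step is justifying the expression for $\delta\ell$ through $\Omega_{\dot{x}}$, including the torsion-free identity $\frac{D}{Ds}\dot{x}=\frac{D}{Dt}\delta x$; this is routine and mirrors the derivation of \eqref{ehor} in Ref. \cite{cmr}.
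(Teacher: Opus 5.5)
Your proposal is correct and follows the paper's proof essentially step by step. It uses the same first-variation formula (the $E\,\eta\,\ell$ term from varying the exponential factor plus the $\Omega_{\dot x}$-splitting of $\delta\ell$), the same two integrations by parts, the same integral identities \eqref{XdX}--\eqref{YdY}, and then the fundamental lemma. The differences are minor and in your favor: parametrizing vertical variations directly by $\eta=\int_0^t\delta y$ with $\eta(0)=\eta(\tau)=0$ makes the paper's explicit bump-times-sine construction of $\delta y$ unnecessary, and you state explicitly the torsion-freeness of $\nabla$ that the paper uses tacitly when writing $\Omega_{\dot x}(\delta\dot x)=\delta x\oplus\frac{D}{Dt}\delta x$.
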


\begin{proof}
It is easy to see that (recall the Remark \ref{invar}) 
\begin{equation}
\begin{array}{lll}
\delta\check{A}\left(x,y\right) & = & \int_{0}^{\tau}\exp\left(\int_{0}^{t}y\left(s\right)\,\mathsf{d}s\right)\,\left(\int_{0}^{t}\delta y\left(s\right)\,\mathsf{d}s\right)\,\ell\left(\dot{x}\left(t\right),y\left(t\right)\right)\,dt\\
\\
 &  & +\int_{0}^{\tau}\exp\left(\int_{0}^{t}y\left(s\right)\,\mathsf{d}s\right)\,\delta\left(\ell\left(\dot{x}\left(t\right),y\left(t\right)\right)\right)\,dt
\end{array}\label{dS-1}
\end{equation}
and 
\[
\delta\left(\ell\left(\dot{x}\left(t\right),y\left(t\right)\right)\right)=\left\langle d_{1}\ell\left(\dot{x}\left(t\right),y\left(t\right)\right),\delta\dot{x}\left(t\right)\right\rangle +\frac{\partial\ell}{\partial y}\left(\dot{x}\left(t\right),y\left(t\right)\right)\,\delta y\left(t\right),
\]
and, from the definition of ${\displaystyle \left.\partial\ell\right/\partial x}$
and ${\displaystyle \left.\partial\ell\right/\partial\dot{x}}$ (recall
also the Eq. \eqref{Ov}), 
\[
\left\langle d_{1}\ell\left(\dot{x}\left(t\right),y\left(t\right)\right),\delta\dot{x}\left(t\right)\right\rangle ={\displaystyle \left\langle \frac{\partial\ell}{\partial x}\left(\dot{x}\left(t\right),y\left(t\right)\right),\delta x\left(t\right)\right\rangle +\left\langle \frac{\partial\ell}{\partial\dot{x}}\left(\dot{x}\left(t\right),y\left(t\right)\right),\frac{D}{Dt}\delta x\left(t\right)\right\rangle }.
\]
To simplify the notation, let us omit the argument $\left(\dot{x}\left(t\right),y\left(t\right)\right)$
in $\ell$, ${\displaystyle \left.\partial\ell\right/\partial y}$,
${\displaystyle \left.\partial\ell\right/\partial x}$ and ${\displaystyle \left.\partial\ell\right/\partial\dot{x}}$
. On the one hand, since 
\[
\frac{d}{dt}\left\langle \frac{\partial\ell}{\partial\dot{x}},\delta x\left(t\right)\right\rangle -\left\langle \frac{D}{Dt}\frac{\partial\ell}{\partial\dot{x}},\delta x\left(t\right)\right\rangle =\left\langle \frac{\partial\ell}{\partial\dot{x}},\frac{D}{Dt}\delta x\left(t\right)\right\rangle ,
\]
and 
\[
\begin{array}{lll}
{\displaystyle \frac{d}{dt}\left(\exp\left(\int_{0}^{t}y\left(s\right)\,\mathsf{d}s\right)\,\left\langle \frac{\partial\ell}{\partial\dot{x}},\delta x\left(t\right)\right\rangle \right)} & - & {\displaystyle \exp\left(\int_{0}^{t}y\left(s\right)\,\mathsf{d}s\right)\,\left(y\left(t\right)\,\left\langle \frac{\partial\ell}{\partial\dot{x}},\delta x\left(t\right)\right\rangle \right)}\\
\\
 & = & {\displaystyle \exp\left(\int_{0}^{t}y\left(s\right)\,\mathsf{d}s\right)\,\left(\frac{d}{dt}\left\langle \frac{\partial\ell}{\partial\dot{x}},\delta x\left(t\right)\right\rangle \right),}
\end{array}
\]
the terms in the integrand of $\delta\check{A}\left(x,y\right)$ proportional
to $\delta x$ are 
\[
\begin{array}{l}
{\displaystyle \exp\left(\int_{0}^{t}y\left(s\right)\,\mathsf{d}s\right)\,\left\langle \frac{\partial\ell}{\partial x},\delta x\left(t\right)\right\rangle }+{\displaystyle \frac{d}{dt}\left(\exp\left(\int_{0}^{t}y\left(s\right)\,\mathsf{d}s\right)\,\left\langle \frac{\partial\ell}{\partial\dot{x}},\delta x\left(t\right)\right\rangle \right)}\\
\\
{\displaystyle -\exp\left(\int_{0}^{t}y\left(s\right)\,\mathsf{d}s\right)\,\left(y\left(t\right)\,\left\langle \frac{\partial\ell}{\partial\dot{x}},\delta x\left(t\right)\right\rangle \right)}-{\displaystyle \exp\left(\int_{0}^{t}y\left(s\right)\,\mathsf{d}s\right)\,\left\langle \frac{D}{Dt}\frac{\partial\ell}{\partial\dot{x}},\delta x\left(t\right)\right\rangle .}
\end{array}
\]
On the other hand, since 
\[
\begin{array}{lll}
{\displaystyle \frac{d}{dt}\left(\exp\left(\int_{0}^{t}y\left(s\right)\,\mathsf{d}s\right)\,\frac{\partial\ell}{\partial y}\,\int_{0}^{t}\delta y\left(s\right)\,\mathsf{d}s\right)} & - & {\displaystyle \exp\left(\int_{0}^{t}y\left(s\right)\,\mathsf{d}s\right)\,{\displaystyle \left(y\left(t\right)\,\frac{\partial\ell}{\partial y}\,\int_{0}^{t}\delta y\left(s\right)\,\mathsf{d}s\right)}}\\
\\
 & - & {\displaystyle \exp\left(\int_{0}^{t}y\left(s\right)\,\mathsf{d}s\right)\,\left(\frac{d}{dt}\left(\frac{\partial\ell}{\partial y}\right)\,\int_{0}^{t}\delta y\left(s\right)\,\mathsf{d}s\right)}\\
\\
 & = & {\displaystyle \exp\left(\int_{0}^{t}y\left(s\right)\,\mathsf{d}s\right)\,\left(\frac{\partial\ell}{\partial y}\,\delta y\left(t\right)\right)},
\end{array}
\]
the terms in the integrand of $\delta\check{A}\left(x,y\right)$ proportional
to $\delta y$ are 
\[
\begin{array}{l}
{\displaystyle \frac{d}{dt}\left(\exp\left(\int_{0}^{t}y\left(s\right)\,\mathsf{d}s\right)\,\frac{\partial\ell}{\partial y}\,\int_{0}^{t}\delta y\left(s\right)\,\mathsf{d}s\right)}-{\displaystyle \exp\left(\int_{0}^{t}y\left(s\right)\,\mathsf{d}s\right)\,{\displaystyle \left(y\left(t\right)\,\frac{\partial\ell}{\partial y}\,\int_{0}^{t}\delta y\left(s\right)\,\mathsf{d}s\right)}}\\
\\
-{\displaystyle \exp\left(\int_{0}^{t}y\left(s\right)\,\mathsf{d}s\right)\,\left(\frac{d}{dt}\left(\frac{\partial\ell}{\partial y}\right)\,\int_{0}^{t}\delta y\left(s\right)\,\mathsf{d}s\right)}+{\displaystyle \exp\left(\int_{0}^{t}y\left(s\right)\,\mathsf{d}s\right)\,\left(\int_{0}^{t}\delta y\left(s\right)\,\mathsf{d}s\right)\,\ell}.
\end{array}
\]
Finally, using that the terms with the total derivatives $d/dt$ vanish
(because of \eqref{rv1}), and the fact that the variations $\delta x$
and $\delta y$ are independent, we have that the critical points
must satisfy 
\begin{equation}
\int_{0}^{\tau}\exp\left(\int_{0}^{t}y\left(s\right)\,\mathsf{d}s\right)\left\langle \frac{D}{Dt}\frac{\partial\ell}{\partial\dot{x}}\left(\dot{x}\left(t\right),y\left(t\right)\right)+y\left(t\right)\,\frac{\partial\ell}{\partial\dot{x}}\left(\dot{x}\left(t\right),y\left(t\right)\right)-\frac{\partial\ell}{\partial x}\left(\dot{x}\left(t\right),y\left(t\right)\right),\delta x\left(t\right)\right\rangle \,dt=0\label{XdX}
\end{equation}
and 
\begin{equation}
\int_{0}^{\tau}\exp\left(\int_{0}^{t}y\left(s\right)\,\mathsf{d}s\right)\left(\frac{d}{dt}\frac{\partial\ell}{\partial y}\left(\dot{x}\left(t\right),y\left(t\right)\right)+y\left(t\right)\,\frac{\partial\ell}{\partial y}\left(\dot{x}\left(t\right),y\left(t\right)\right)-\ell\left(\dot{x}\left(t\right),y\left(t\right)\right)\right)\,\left(\int_{0}^{t}\delta y\left(s\right)\,\mathsf{d}s\right)\,dt=0.\label{YdY}
\end{equation}
So, using standard techniques of variational calculus, Eq. \eqref{hor}
easily follows from \eqref{XdX}. To prove \eqref{ver}, we need to
work a little bit more. Suppose that 
\[
G\left(t\right)\coloneqq\frac{d}{dt}\frac{\partial\ell}{\partial y}\left(\dot{x}\left(t\right),y\left(t\right)\right)+y\left(t\right)\,\frac{\partial\ell}{\partial y}\left(\dot{x}\left(t\right),y\left(t\right)\right)-\ell\left(\dot{x}\left(t\right),y\left(t\right)\right)>0
\]
for some $t_{0}\in\left(0,\tau\right)$. By continuity, there must
exist $\epsilon>0$ such that $\left(t_{0}-\epsilon,t_{0}+\epsilon\right)\subseteq\left(0,\tau\right)$
and above inequality holds for all $t\in\left(t_{0}-\epsilon,t_{0}+\epsilon\right)$.
Let $\varphi:\mathbb{R}\rightarrow\mathbb{R}$ be a positive smooth
function, even w.r.t. $t_{0}$ and with support equal to $\left[t_{0}-\epsilon,t_{0}+\epsilon\right]$.
Define 
\begin{equation}
\delta y\left(s\right)\coloneqq\varphi\left(s\right)\,\sin\left(\frac{\epsilon-t_{0}+s}{2\epsilon}\,\pi\right),\quad s\in\left[0,\tau\right].\label{vdy}
\end{equation}
Then, we have that, 
\[
\tilde{\varphi}\left(t\right)\coloneqq\int_{0}^{t}\delta y\left(s\right)\,\mathsf{d}s=\int_{0}^{t}\varphi\left(s\right)\,\sin\left(\frac{\epsilon-t_{0}+s}{2\epsilon}\,\pi\right)\,\mathsf{d}s=\left\{ \begin{array}{ll}
0, & \quad0\leq t<t_{0}-\epsilon,\\
{\displaystyle \int_{t_{0}-\epsilon}^{t}\varphi\left(s\right)\,\sin\left({\displaystyle \frac{\epsilon-t_{0}+s}{2\epsilon}\,\pi}\right)\,\mathsf{d}s,} & \quad t_{0}-\epsilon\leq t\leq t_{0}+\epsilon,\\
0, & \quad t_{0}+\epsilon<t\leq\tau.
\end{array}\right.
\]
In particular, $\tilde{\varphi}\left(t\right)>0$ for $t\in\left(t_{0}-\epsilon,t_{0}+\epsilon\right)$
and it is null outside. Then, the variation $\delta y$ given by \eqref{vdy}
satisfies \eqref{rv1} and 
\[
\begin{array}{l}
{\displaystyle \int_{0}^{\tau}\exp\left(\int_{0}^{t}y\left(s\right)\,\mathsf{d}s\right)\,\left(\int_{0}^{t}\delta y\left(s\right)\,\mathsf{d}s\right)\,G\left(t\right)\,dt={\displaystyle \int_{0}^{\tau}\exp\left(\int_{0}^{t}y\left(s\right)\,\mathsf{d}s\right)\,\tilde{\varphi}\left(t\right)\,G\left(t\right)\,dt}}\\
\\
={\displaystyle \int_{t_{0}-\epsilon}^{t_{0}+\epsilon}\underbrace{\exp\left(\int_{0}^{t}y\left(s\right)\,\mathsf{d}s\right)\,\tilde{\varphi}\left(t\right)\,G\left(t\right)}_{>0}\,dt>0,}
\end{array}
\]
which contradicts Eq. \eqref{YdY}. This proves that Eq. \eqref{ver}
must hold. 
\end{proof}
\begin{rem*}
As in Section \ref{LPe}, if we consider local coordinates $(x^{i})$
on $Q/{\Bbb R}^{+},$ the corresponding local coordinates $(x^{i},\dot{x}^{i})$
on $T(Q/{\mathbb{R}^{+}})$ and the corresponding flat local affine
connection on $T(Q/{\mathbb{R}}^{+})\to Q/{\mathbb{R}}^{+}$, then
Eqs. (\ref{hor}) and (\ref{ver}) may be expressed as 
\[
\frac{d}{dt}\left(\frac{\partial\ell}{\partial\dot{x}}\right)+y\,\frac{\partial\ell}{\partial\dot{x}}-\frac{\partial\ell}{\partial{x}}=0,\,\;\;\;\frac{d}{dt}\left(\frac{\partial\ell}{\partial y}\right)+y\,\frac{\partial\ell}{\partial y}-\ell=0,
\]
appeared in Ref. \cite{grab}, but there, remarkably, were derived
in a very different way. $\quad\diamond$ 
\end{rem*}
It is worth mentioning that, even though the (standard) Lagrange-Poincaré
equations (see Eqs. \eqref{ehor} and \eqref{ever}) and their scaling
version (see Eqs. \eqref{hor} and \eqref{ver}) have a similar structure,
they are notably different.

\subsection{An illustrative example}

Consider the example given at the end of Section \ref{jm}, where
$Q=\mathbb{R}^{+}\times\mathbb{R}^{+}$, the action $\widetilde{\psi}$
is given by \eqref{sab} and the Lagrangian is of the form $L={\left(e-V\circ\tau_{Q}\right)\,K_{g}}.$
Now, take $g$ as the Euclidean metric in $\mathbb{R}^{+}\times\mathbb{R}^{+}$,
\[
V\left(a,b\right)=\arctan\left(b/a\right),\quad\forall a,b\in\mathbb{R}^{+},\quad\textrm{and}\quad e>\pi/2.
\]
Then, 
\begin{equation}
L\left(a,b,\dot{a},\dot{b}\right)={\left(e-\arctan\left(b/a\right)\right)\,\left(\dot{a}^{2}+\dot{b}^{2}\right)}.\label{L}
\end{equation}
Since we can identify the quotient $Q/\mathbb{R}^{+}$ with the open
interval $\left(0,\pi/2\right)$, then the principal bundle can be
seen as the map $\pi\left(a,b\right)=\arctan\left(b/a\right)$. So,
if we take $f\left(a,b\right)={a^{2}+b^{2}}$ as scaling function,
the Atiyah diffeomorphism is given by 
\begin{equation}
\alpha_{f}\left(p\left(a,b,\dot{a},\dot{b}\right)\right)=\left(\pi_{*,\left(a,b\right)}\left(\dot{a},\dot{b}\right),\frac{\mathsf{d}f_{\left|\left(a,b\right)\right.}\left(\dot{a},\dot{b}\right)}{f\left(a,b\right)}\right)=\left(x,\dot{x},y\right),\label{if}
\end{equation}
with 
\[
x=\arctan\left(b/a\right),\quad\dot{x}=\frac{a\,\dot{b}-b\,\dot{a}}{a^{2}+b^{2}}\quad\textrm{and}\quad y=\frac{2(a\,\dot{a}+b\,\dot{b})}{a^{2}+b^{2}}.
\]
The last two equations imply that 
\[
\dot{a}=\frac{a\,y-2b\,\dot{x}}{2}\quad\textrm{and}\quad\dot{b}=\frac{b\,y+2a\,\dot{x}}{2},
\]
and consequently 
\[
\frac{\dot{a}^{2}+\dot{b}^{2}}{a^{2}+b^{2}}=\dot{x}^{2}+\left(\frac{y}{2}\right)^{2}.
\]
Combining \eqref{rel}, \eqref{L}, \eqref{if} and the last equation,
we have that the reduced Lagrangian is 
\[
\ell\left(x,\dot{x},y\right)=\frac{L\left(a,b,\dot{a},\dot{b}\right)}{f\left(a,b\right)}={2\left(e-x\right)\,\left(\dot{x}^{2}+\left(\frac{y}{2}\right)^{2}\right)}.
\]
Finally, suppose we have a solution $\left(x\left(t\right),y\left(t\right)\right)$
of the scaling-Lagrange-Poincaré equations (see \eqref{hor} and \eqref{ver})
related to $\ell$. The reconstruction formula \eqref{recp} says
that, for all $\varsigma\in\mathbb{R}^{+}$, 
\[
\left(a\left(t\right),b\left(t\right)\right)=\psi\left(e^{\int_{0}^{t}y\left(s\right)\,\mathsf{d}s},\left(\pi,f\right)^{-1}\left(x\left(t\right),\varsigma\right)\right)
\]
is a solution of the Euler-Lagrange equations for $L$. To have a
more concrete expression, let us calculate $\left(\pi,f\right)^{-1}$.
Since 
\[
\left(\pi,f\right)\left(a,b\right)=\left(\arctan\left(b/a\right),\frac{1}{2}({a^{2}+b^{2}})\right)\eqqcolon\left(c,d\right),
\]
then $\tan c=b/a$ and $\sqrt{2d}=\sqrt{a^{2}+b^{2}}$, and solving
these equations for $a$ and $b$ we have that 
\[
\left(\pi,f\right)^{-1}\left(c,d\right)=\left(\frac{\sqrt{d}}{\sqrt{1+\left(\tan c\right)^{2}}},\frac{\sqrt{d}}{\sqrt{1+\left(\cot c\right)^{2}}}\right).
\]
As a consequence, using \eqref{sab}, the reconstructed curve is 
\[
\left(a\left(t\right),b\left(t\right)\right)=\left(\sqrt{\frac{2\,\varsigma\,e^{\int_{0}^{t}y\left(s\right)\,\mathsf{d}s}}{1+\left(\tan\left(x\left(t\right)\right)\right)^{2}}},\sqrt{\frac{2\,\varsigma\,e^{\int_{0}^{t}y\left(s\right)\,\mathsf{d}s}}{{1+\left(\cot\left(x\left(t\right)\right)\right)^{2}}}}\right).
\]

\section{Homogeneous Lagrangians and the Herglotz principle}

\label{S5}

The \textit{Herglotz variational principle} (see \cite{lainz} and
references therein) is intimately related to the contact Hamiltonian
systems. Since the latter can be always obtained as the reduction
of a homogeneous symplectic Hamiltonian system, the following question
naturally arises: is the Herglotz variational principle related in
some \textit{natural} way to the homogeneous Lagrangian systems? We
show below that the answer to this question seems to be negative.

\subsection{From homogeneity to Herglotz}

Consider an \textit{action-dependent Lagrangian} $\hat{L}:T\hat{Q}\times\mathbb{R}\rightarrow\mathbb{R}$
on a manifold $\hat{Q}$ and its related Herglotz variational principle
on the interval $\left[0,\tau\right]$ (see Ref. \cite{lainz} for
more details). If we fix an affine connection for the tangent bundle
$\tau_{\hat{Q}}:T\hat{Q}\rightarrow\hat{Q}$, it can be shown that
the critical points of that principle are precisely the solutions
of the equations

\begin{equation}
-\frac{D}{Dt}\frac{\partial\hat{L}}{\partial\dot{x}}\left(\dot{x}\left(t\right),y\left(t\right)\right)+\left(\frac{\partial\hat{L}}{\partial y}\,\frac{\partial\hat{L}}{\partial\dot{x}}\right)\left(\dot{x}\left(t\right),y\left(t\right)\right)+\frac{\partial\hat{L}}{\partial x}\left(\dot{x}\left(t\right),y\left(t\right)\right)=0\label{mhe}
\end{equation}
and 
\begin{equation}
\dot{y}\left(t\right)=\hat{L}\left(\dot{x}\left(t\right),y\left(t\right)\right),\label{she}
\end{equation}
for $t\in\left(0,\tau\right)$, where $\partial\hat{L}/\partial\dot{x}$
and $\partial\hat{L}/\partial x$ are defined as in Eqs. \eqref{dfb1}
to \eqref{dfb4} (replacing $Q/\mathbb{R}^{+}$ by $\hat{Q}$, $\ell$
by $\hat{L}$ and ${\mathfrak{g}}$ by ${\mathbb{R}}$). We want to
solve the following problem: given a homogeneous Lagrangian function
$L:TQ\rightarrow\mathbb{R}$, is there an action-dependent Lagrangian
$\hat{L}:T\hat{Q}\times\mathbb{R}\rightarrow\mathbb{R}$, for $\hat{Q}=Q/\mathbb{R}^{+}$,
such that the solutions of \eqref{mhe} and \eqref{she} coincide
with the solutions of the scaling-Lagrange-Poincaré equations related
to $L$? We show in the following that this is not always true.

\bigskip{}

Consider the manifold $Q=\mathbb{R}^{n}-\left\{ 0\right\} $, the
principal action $\psi$ given by \eqref{fsx} and the scaling function
$f$ given by \eqref{fx}. It is easy to see that the diffeomorphism
$\alpha_{f}$, identifying $TQ/{\mathbb{R}}^{+}$ with $T(Q/{\mathbb{R}}^{+})\times\mathbb{R}$,
is given by 
\[
{\alpha}_{f}\left(p\left(\mathbf{q},\dot{\mathbf{q}}\right)\right)=\left(\pi_{*,\mathbf{q}}\left(\dot{\mathbf{q}}\right),\frac{2\,\left\langle \mathbf{q},\dot{\mathbf{q}}\right\rangle }{\left\Vert \mathbf{q}\right\Vert ^{2}}\right)\eqqcolon\left(\dot{\mathbf{x}},y\right),
\]
where $\left\langle \cdot,\cdot\right\rangle $ denotes the Euclidean
metric in $\mathbb{R}^{n}$ and $\left\Vert \cdot\right\Vert $ its
related norm. On the other hand, it is clear that the Lagrangian function
\[
L\left(\mathbf{q},\dot{\mathbf{q}}\right)=2k\,\left\langle \mathbf{q},\dot{\mathbf{q}}\right\rangle 
\]
is homogeneous for every $k\in\mathbb{R}$, and that its related reduced
Lagrangian (see Eq. \eqref{rel}) is 
\[
\ell\left(\dot{\mathbf{x}},y\right)=k\,y.
\]
Then, for this $\ell$, the scaling-Lagrange-Poincaré equations \eqref{hor}
and \eqref{ver} are trivial, i.e. any curve $\left(\mathbf{x}\left(t\right),y\left(t\right)\right)$
is a solution of such equations. Suppose there exists an action-dependent
Lagrangian $\hat{L}:T\left(Q/\mathbb{R}^{+}\right)\times\mathbb{R}\rightarrow\mathbb{R}$
such that any curve is a solution of \eqref{mhe} and \eqref{she}.
Then, the curve 
\[
\left(\mathbf{x}\left(t\right),y\left(t\right)\right)=\left(\mathbf{x}_{0}+\dot{\mathbf{x}}_{0}\,t,y_{0}\right),\quad t\in\left[0,\tau\right],
\]
must be a solution for every $\mathbf{x}_{0}\in Q$, $\dot{\mathbf{x}}_{0}\in\mathbb{R}^{n}$
and $y_{0}\in\mathbb{R}$. This means, according to \eqref{she},
that 
\[
0=\hat{L}\left(\mathbf{x}_{0}+\dot{\mathbf{x}}_{0}\,t,\dot{\mathbf{x}}_{0},y_{0}\right),\quad t\in\left[0,\tau\right].
\]
In particular, for $t=0$, we have that $\hat{L}\left(\mathbf{x}_{0},\dot{\mathbf{x}}_{0},y_{0}\right)=0$.
As a consequence, $\hat{L}$ must be identically zero. But in this
case, a curve of the form 
\[
\left(\mathbf{x}\left(t\right),y\left(t\right)\right)=\left(\mathbf{x}\left(t\right),t\right),\quad t\in\left[0,\tau\right],
\]
can not be a solution of \eqref{she} (for $\hat{L}\equiv0$), which
gives rise to a contradiction. Summing up, the answer to above question
is negative.

\subsection{From Herglotz to homogeneity}

\label{S5.2}

Now, we want to solve the converse problem: given an action-dependent
Lagrangian $\hat{L}:T\hat{Q}\times\mathbb{R}\rightarrow\mathbb{R}$,
does there exist a homogeneous Lagrangian function $L:TQ\rightarrow\mathbb{R}$,
for some $Q$ and some principal action $\psi$ for which $\hat{Q}=Q/\mathbb{R}^{+}$,
such that the solutions of its related scaling-Lagrange-Poincaré equations
coincide with the solutions of \eqref{mhe} and \eqref{she} for $\hat{L}$?
Again, we show below that this is not always true.

\bigskip{}

Take $\hat{L}\equiv0$. This means that the solutions of \eqref{mhe}
and \eqref{she} are of the form $\left(x\left(t\right),y_{0}\right)$,
where $x\left(t\right)$ is arbitrary and $y_{0}$ is constant. Suppose
there exists a homogeneous Lagrangian function $L$ as described in
the question above. Then, any curve of the form $\left(x\left(t\right),y_{0}\right)$
is a solution of the scaling-Lagrange-Poincaré equations related to
the reduced Lagrangian $\ell:T\hat{Q}\times\mathbb{R}\rightarrow\mathbb{R}$
of $L$. As a consequence, given $y_{0}$, consider the function (writing
the base point $x$ explicitly from now on) 
\[
\sigma\left(x,\dot{x}\right)=\frac{\partial\ell}{\partial y}\left(x,\dot{x},y_{0}\right),
\]
and given $\left(x_{0},\dot{x}_{0}\right)$ consider a curve $x\left(t\right)$
defined on an interval $I$ such that 
\[
\left(x\left(0\right),\dot{x}\left(0\right)\right)=\left(x_{0},\dot{x}_{0}\right)\quad\textrm{and}\quad\sigma\left(x\left(t\right),\dot{x}\left(t\right)\right)=\sigma\left(x_{0},\dot{x}_{0}\right),\quad\forall t\in I,
\]
i.e. $\sigma$ is constant along the curve $\left(x\left(t\right),\dot{x}\left(t\right)\right)$.
This can be done for all $\left(x_{0},\dot{x}_{0},y_{0}\right)$ living
inside some rectangular open subset $U\times V\subseteq T\hat{Q}\times\mathbb{R}$,
as we show in the Appendix. Then, Eq. \eqref{ver} says for $t=0$
that 
\[
y_{0}\,\frac{\partial\ell}{\partial y}\left(x_{0},\dot{x}_{0},y_{0}\right)=\ell\left(x_{0},\dot{x}_{0},y_{0}\right)
\]
on $U\times V$. Solving the last equation for $\ell$ we have that
\[
\ell\left(x,\dot{x},y\right)=K\left(x,\dot{x}\right)\,y,\quad\forall\left(x,\dot{x}\right)\in U,\;\forall y\in V,
\]
for some function $K$. From now on, we shall assume that $U$ is
connected. Going back to \eqref{ver}, given a curve $x\left(t\right)$
such that $\left(x\left(t\right),\dot{x}\left(t\right)\right)\in U$,
we obtain that 
\[
\frac{d}{dt}K\left(x\left(t\right),\dot{x}\left(t\right)\right)=0.
\]
According to the calculations made in the Appendix, if 
\[
\frac{\partial K}{\partial x}\neq0\quad\textrm{or}\quad\frac{\partial K}{\partial\dot{x}}\neq0
\]
for some point of $U\times V$, then the curve $x\left(t\right)$
must satisfy an ODE, which contradicts the fact that $x\left(t\right)$
is arbitrary. So, $K$ must be a constant $k$ on $U$ (since $U$
is connected), i.e. 
\[
\ell\left(x,\dot{x},y\right)=k\,y
\]
on $U\times V$. As a consequence, Eqs. \eqref{hor} and \eqref{ver}
are trivial (i.e. they do not impose any condition). This means that
any curve $y\left(t\right)$ inside $V$ (and not only the constant
one: $y\left(t\right)=y_{0}$) is part of a solution for \eqref{hor}
and \eqref{ver}, which gives rise to a contradiction. Thus, as we
have anticipated, the answer to the question stated above is negative.

\section{Future work}

\label{S6}

In this paper we have actually focused on \textit{positive} homogeneity
only (see Remark \ref{posh}). In a forthcoming paper (see \cite{FeGrMaPa1})
we shall also consider general homogeneity, i.e. homogeneity w.r.t.
a principal action of the entire multiplicative group of real numbers
${\mathbb{R}}^{\times}={\mathbb{R}}-\{0\}$. In that case, the related
principal bundle is not necessarily trivial, i.e. the existence of
a scaling function can not be ensured. Moreover, we shall work in
the more general context of the \textit{pre-symplectic Hamiltonian
systems} (in its variational formulation). They are given by triples
$\left(M,\theta,F\right)$, where $M$ is a manifold, $\theta$ is
a $1$-form and $F$ a function on $M$, and their trajectories are
the critical points of the functional 
\[
S\left(c\right)=\int_{0}^{\tau}\left(\theta\left(\dot{c}\left(t\right)\right)-F\left(c\left(t\right)\right)\right)\,\mathsf{d}t,
\]
defined on curves $\left\{ c:\left[0,\tau\right]\rightarrow M\right\} $,
w.r.t. infinitesimal variations satisfying 
\[
\delta c\left(0\right),\delta c\left(\tau\right)\in\mathsf{O}_{M}.
\]
The Lagrangian systems constitute a particular case, where $M=TQ$
for some manifold $Q$, 
\[
\theta=\mathbb{F}L^{*}\theta_{Q}\quad\textrm{and}\quad F=\left\langle \mathbb{F}L\left(\cdot\right),\cdot\right\rangle -L,
\]
for some function $L:TQ\rightarrow\mathbb{R}$ (as usual, $\theta_{Q}$
indicates the Liouville $1$-form of $T^{*}Q$). We shall study the
variational reduction of such systems in the presence of scaling symmetries,
i.e. when $\theta$ and $F$ are homogeneous w.r.t. a principal action
of $\mathbb{R}^{\times}$. We shall also study the relationship of
the resulting reduced systems with the Herglotz variational principle.

Another interesting topic that we intend to discuss (see \cite{FeGrMaPa2})
is the development of discrete variational principles that parallel
those presented in this paper for the continuous homogeneous and reduced
systems. This construction should allow us to relate the critical
points of both discrete variational principles and, consequently,
use the critical points of the discrete reduced system to reconstruct
the critical points of the original (unreduced) system. Loosely speaking,
a discretization of a continuous homogeneous system is a discrete
homogeneous system whose critical points -trajectories- approximate,
in some sense, those of the continuous system. Then, once the reduction
of discrete homogeneous systems is well understood, an interesting
and important question that we also intend to discuss is whether the
two processes of discretization and reduction commute.

\section*{Acknowledgments}

S. Grillo thanks CONICET for financial support. S. Grillo, JC Marrero
and E. Padrón acknowledge financial support from the Spanish Ministry
of Science and Innovation under grant PID2022-137909-NB-C22.

\section*{Appendix}

Consider open subsets $D\subseteq\mathbb{R}^{nm}$ and $I\subseteq\mathbb{R}$,
denote by 
\[
\left(x,x^{\left(1\right)},\ldots,x^{\left(m-1\right)}\right),\quad\textrm{with}\quad x=\left(x_{1},...,x_{n}\right)\quad\textrm{and}\quad x^{\left(i\right)}=\left(x_{1}^{\left(i\right)},...,x_{n}^{\left(i\right)}\right),
\]
the elements of $D$, and by $y$ the elements of $I$. Consider also
a smooth function $F:D\times I\rightarrow\mathbb{R}$ and, using the
standard notation, consider the related $m$ order ODE (with $n$
unknowns and parametrized by $y$) 
\[
\frac{\partial F}{\partial x^{\left(m-1\right)}}\cdot x^{\left(m\right)}\left(t\right)+\cdots+\frac{\partial F}{\partial x}\cdot x^{\left(1\right)}\left(t\right)=0,\qquad\left(\star\right)
\]
where 
\[
\frac{\partial F}{\partial x^{\left(i\right)}}=\left(\frac{\partial F}{\partial x_{1}^{\left(i\right)}},\ldots,\frac{\partial F}{\partial x_{n}^{\left(i\right)}}\right)\left(x\left(t\right),x^{\left(1\right)}\left(t\right),\ldots,x^{\left(m-1\right)}\left(t\right),y\right),
\]
\[
x^{\left(i\right)}\left(t\right)=\frac{d^{i}}{dt^{i}}x\left(t\right),
\]
for $i=1,...,m-1$, and $x\left(t\right)=\left(x_{1}\left(t\right),...,x_{n}\left(t\right)\right)$
is the unkown of the ODE. The point $\cdot$ denotes the canonical
inner product in $\mathbb{R}^{n}$. We shall show there exists a point
$\left(a,a^{\left(1\right)},\ldots,a^{\left(m-1\right)},b\right)\in D\times I$
and a ``rectangular'' open neighborhood $U\times V\subseteq D\times I$
of it such that, given $\left(x_{0},x_{0}^{\left(1\right)},\ldots,x_{0}^{\left(m-1\right)},y_{0}\right)\in U\times V$,
there exists a curve $x:\left(-\epsilon,\epsilon\right)\rightarrow\mathbb{R}^{n}$
which solves $\left(\star\right)$ for $y=y_{0}$ and satisfies 
\[
\left(x\left(0\right),x^{\left(1\right)}\left(0\right),\ldots,x^{\left(m-1\right)}\left(0\right)\right)=\left(x_{0},x_{0}^{\left(1\right)},\ldots,x_{0}^{\left(m-1\right)}\right).
\]
Let us show it by induction on $m$. For $m=1,$ we have the ODE 
\[
\frac{\partial F}{\partial x}\cdot x^{\left(1\right)}=0,
\]
i.e. 
\[
\frac{\partial F}{\partial x}\left(x\left(t\right),y\right)\cdot\dot{x}\left(t\right)=0.\qquad\left(\star\star\right)
\]
If $\partial F/\partial x=0$ along all of $D\times I$, then any
curve is a solution of $\left(\star\star\right)$, and the result
easily follows. Otherwise, there must exist a point 
\[
\left(a,b\right)\in D\times I\subseteq\mathbb{R}^{n}\times\mathbb{R}
\]
and a neighborhood of it where $\partial F/\partial x\neq0$. In this
case, consider around $\left(a,b\right)$ a local orthonormal basis
$X_{1},...,X_{n}$ of the trivial vector bundle $\left(D\times I\right)\times\mathbb{R}^{n}$
(w.r.t. the canonical inner product on $\mathbb{R}^{n}$) such that
$X_{1}=\partial F/\partial x$. It is clear that the open neighborhood
where such a basis is defined can be taken rectangular. Let us denote
it $U\times V$. Given $y_{0}\in V$, consider the normal system of
ODEs 
\[
\dot{x}\left(t\right)=X_{2}\left(x\left(t\right),y_{0}\right).
\]
As it is well-known, since $X_{2}\left(\cdot,y_{0}\right)$ is continuous,
for every $x_{0}\in U$ there exists a solution $x:\left(-\epsilon,\epsilon\right)\rightarrow\mathbb{R}^{n}$
of above system such that $x\left(0\right)=x_{0}$. And, since $X_{1}\cdot X_{2}=0$,
such a solution is also a solution of $\left(\star\star\right)$ for
$y=y_{0}$. This implies that the wanted result is true for $m=1$.
Assume now the result is true for $m-1$ and let us prove it for $m$.
If 
\[
\frac{\partial F}{\partial x^{\left(m-1\right)}}=0
\]
along all of $D\times I$, then $F$ does not depend on $x^{\left(m-1\right)}$,
Eq. $\left(\star\right)$ reduces to 
\[
\frac{\partial F}{\partial x^{\left(m-2\right)}}\cdot x^{\left(m-1\right)}+\cdots+\frac{\partial F}{\partial x}\cdot x^{\left(1\right)}=0,
\]
and the wanted result follows from the inductive hypothesis. On the
other hand, if $\partial F/\partial x^{\left(m-1\right)}\neq0$ for
some point $\left(a,a^{\left(1\right)},\ldots,a^{\left(m-1\right)},b\right)\in D\times I$,
then it is different from $0$ around that point, and we can consider
as above a local orthonormal basis $X_{1},...,X_{n}$ of $\left(D\times I\right)\times\mathbb{R}^{n}$
with 
\[
X_{1}=\frac{\partial F}{\partial x^{\left(m-1\right)}}.
\]
Let us denote again $U\times V$ the open subset where such a basis
is defined. If we call 
\[
G\coloneqq\frac{\partial F}{\partial x^{\left(m-2\right)}}\cdot x^{\left(m-1\right)}+\cdots+\frac{\partial F}{\partial x}\cdot x^{\left(1\right)}
\]
and 
\[
H\coloneqq\frac{\partial F}{\partial x^{\left(m-1\right)}}\cdot\frac{\partial F}{\partial x^{\left(m-1\right)}}=X_{1}\cdot X_{1},
\]
then Eq. $\left(\star\right)$ translates to 
\[
\frac{\partial F}{\partial x^{\left(m-1\right)}}\cdot x^{\left(m\right)}+\frac{\partial F}{\partial x^{\left(m-1\right)}}\cdot\left(\frac{\partial F}{\partial x^{\left(m-1\right)}}\,\frac{G}{H}\right)=0,
\]
or equivalently 
\[
\frac{\partial F}{\partial x^{\left(m-1\right)}}\cdot\left(x^{\left(m\right)}+\frac{\partial F}{\partial x^{\left(m-1\right)}}\,\frac{G}{H}\right)=0.
\]
So, following similar ideas as above, we can consider the normal system
of ODEs 
\[
x^{\left(m\right)}=X_{2}-\frac{\partial F}{\partial x^{\left(m-1\right)}}\,\frac{G}{H},
\]
for some parameter $y=y_{0}\in V$, whose solutions are also solutions
of $\left(\star\right)$ (for $y=y_{0}$) and they exist for any initial
condition $\left(x_{0},x_{0}^{\left(1\right)},\ldots,x_{0}^{\left(m-1\right)}\right)\in U$.
In this way, we finally arrive at the wanted result for arbitrary
$m$.

\bigskip{}

For $m=2$ we are saying that, given a smooth function $F\left(x,x^{\left(1\right)},y\right)$
(on a domain as described above), there exist a point $\left(a,a^{\left(1\right)},b\right)$
and a rectangular open neighborhood $U\times V$ of it such that,
for every $\left(x_{0},x_{0}^{\left(1\right)},y_{0}\right)\in U\times V$,
there exists a curve $x\left(t\right)$ such that 
\[
\frac{d}{dt}\left(F\left(x\left(t\right),\dot{x}\left(t\right),y_{0}\right)\right)=0\quad\textrm{and}\quad\left(x\left(0\right),\dot{x}\left(0\right)\right)=\left(x_{0},x_{0}^{\left(1\right)}\right).
\]

\end{document}